\documentclass{amsart}
\usepackage[T1]{fontenc}
\usepackage{mathrsfs}
\usepackage[utf8]{inputenc}
\usepackage[italian,english]{babel}

\usepackage{amsthm,amssymb,amsmath}

\usepackage{pgf}
\usepackage{tikz}
\usetikzlibrary{arrows}
\usepackage{hyperref}
\usepackage{float}
\usepackage[labelformat=empty]{caption,subfig}
\usepackage[textmath,displaymath,floats,graphics,tightpage]{preview}


\theoremstyle{definition}
\newtheorem{Def}{Definition}
\newtheorem{step}{Step}
\theoremstyle{plain}
\newtheorem{Thm}{Theorem}

\newtheorem{Cor}{Corollary}

\newtheorem{Prop}{Proposition}
\theoremstyle{remark}
\newtheorem{Rem}{Remark}
\newtheorem{Ex}{Example}
\newtheorem*{op}{Open problem}

\title{Hilbert functions and set of points in $\mathbb P^1\times\mathbb P^1$}

\author{Paola Bonacini} 
\email{bonacini@dmi.unict.it}

\author{Lucia Marino}
\email{lmarino@dmi.unict.it}

\address{Università degli Studi di Catania, 
  Viale A. Doria 6,
95125 Catania,
Italy}

\subjclass[2010]{13D40}
\keywords{Hilbert function on $\mathbb P^1\times\mathbb P^1$; zero-dimensional schemes}

\begin{document}

\maketitle

\begin{abstract}
In this paper we study the problem of classifying the
 Hilbert functions of zero-dimensional schemes in $\mathbb P^1\times \mathbb P^1$. In particular,
 in the main result of the paper we give conditions to determine
 some Hilbert functions of set of points in $\mathbb P^1\times \mathbb P^1$ and we
 describe geometrically these schemes. Moreover, we show that the
 Hilbert functions of these schemes depend only on the distribution of
 the points on a set of
 $(1,0)$ and $(0,1)$-lines. 
\end{abstract}

\section{Introduction}

Given $Q=\mathbb P^1\times \mathbb P^1$, Giuffrida, Maggioni and
Ragusa in \cite{GMR} have investigated zero-dimensional schemes in
$Q$, studying in particular their Hilbert functions, which turn out to
be matrices of integers with infinite entries and with particular
numerical properties. These numerical conditions are
sufficient to characterize the Hilbert functions of arithmetically
Cohen-Macaulay zero-dimensional schemes in $Q$ (see \cite{GMR}) and
by the Hilbert function of an arithmetically
Cohen-Macaulay zero-dimensional scheme it is possible to determine a geometrical
description of the scheme. Other results about the Hilbert functions
of zero-dimensional schemes in $Q$ have been obtained 
for fat points (see \cite{G}, \cite{GVT}, \cite{GVT3}, \cite{GVT2} and
\cite{VT}). In this paper in Theorem \ref{T:4} we give 
numerical conditions to determine Hilbert functions of some 
set of points in $Q$. In particular we describe these
schemes and we show
that any zero-dimensional scheme having in a grid of $(1,0)$ and
$(0,1)$-lines the same configuration of points has the same Hilbert function.

Given a zero-dimensional scheme $X\subset Q$ and a point $P\in X$, in Section \ref{sec:sep}
we look for the Hilbert function of $X\setminus\{P\}$ in relation to the
Hilbert function of $X$, giving a sufficient condition in Corollary
\ref{C:1}. In particular, we show that under this condition there
exists just one separator for $P\in X$ and it has minimal degree (see
\cite{M} and \cite{O}). As a consequence we can partially improve some results given in \cite{BM} on the Hilbert function of
the union of a zero-dimensional scheme $X$ with a particular set of
points of $Q$.  

In Section \ref{main} we prove Theorem \ref{T:4}, in which we give
sufficient conditions to determine some Hilbert functions of set of points in $Q$. The conditions
in Theorem \ref{T:4} are quite technical, but they show a way to
new conditions for a characterization of
Hilbert functions of zero-dimensional schemes in $Q$.

In Example \ref{Ex:2} we give a matrix satisfying some of the
conditions Theorem \ref{T:4} and an application of Theorem \ref{T:4} is given in Example \ref{Ex:1},
while in Example \ref{Ex:0} we show that the conditions of Theorem
\ref{T:4} are not necessary.

\section{Notation}

Let $k$ be an algebraically closed field, let $\mathbb P^1=\mathbb
P^1_k$, let $Q=\mathbb P^1\times \mathbb P^1$ and let
$\mathscr O_Q$ be its structure sheaf. Let us consider the bi-graded
ring $S=H^0_*\mathscr O_Q=\bigoplus_{a,b\ge 0}H^0\mathscr
O_Q(a,b)$. For any sheaf $\mathscr F$ and any $a,b\in \mathbb Z$ we
define $\mathscr F(a,b)=\mathscr F\otimes_{\mathscr O_Q} \mathscr O_Q(a,b)$.

For any bi-graded $S$-module $N$ let $N_{i,j}$ be the component of
degree $(i,j)$. For any $(i_1,j_1)$, $(i_2,j_2)\in \mathbb N^2$ we write $(i_1,j_1)\ge
(i_2,j_2)$ if $i_1\ge i_2$ and $j_1\ge j_2$. Given a 0-dimensional scheme $X\subset Q$, let $I(X)\subset S$ be the
associated saturated ideal and $S(X)=S/I(X)$ the associated graded ring. 
\begin{Def}
  The function $M_X\colon \mathbb Z\times \mathbb Z\rightarrow \mathbb
  N$ defined by: 
\[
M_X(i,j)=\dim_k {S(X)}_{i,j}=(i+1)(j+1)-\dim_k
  {I(X)}_{i,j} 
\]
is called the \emph{Hilbert function} of $X$. The function $M_X$ can be
represented as an infinite matrix with integer entries
$M_X=(M_X(i,j))=(m_{ij})$ called \emph{Hilbert matrix} of $X$. 
\end{Def}

In this paper we denote $M_X(i,j)$ also by $M_X^{(i,j)}$ to simplify
the notation. Note that $M_X(i,j)=0$ for either $i<0$ or $j<0$, so we restrict
ourselves to the range $i\ge 0$ and $j\ge 0$. Moreover, for $i\gg 0$
and $j\gg 0$ $M_X(i,j)=\deg X$.

\begin{Def}
 Given the Hilbert matrix $M_X$ of a zero-dimensional scheme $X\subset Q$, the \emph{first difference of the Hilbert function} of $X$ is the
  matrix $\Delta M_X=(c_{ij})$, where $c_{ij}=m_{ij}-m_{i-1j}-m_{ij-1}+m_{i-1j-1}$.
\end{Def}

We consider the matrices $\Delta^R M_X=(a_{ij})$ and $\Delta^C
M_X=(b_{ij})$, with $a_{ij}=m_{ij}-m_{ij-1}$ and
$b_{ij}=m_{ij}-m_{i-1j}$. Note that for any $i,j\ge 0$:
\begin{equation}   \label{eq:7}
  a_{ij}=\sum_{t=0}^ic_{tj}
\text{\quad and \quad}
b_{ij}=\sum_{t=0}^jc_{it}.
\end{equation}
For any matrix $M$ with infinite entries it is possible
to define in a similar way $\Delta M$, $\Delta^RM$ and $\Delta^CM$. 

\begin{Def}[{\cite[Definition 2.2]{GMR}}]
 Let $M=(m_{ij})$ be a matrix such that $m_{ij}=0$ for $i<0$ and
     $j<0$. We say that $M$ is admissible if $\Delta M=(c_{ij})$
     satisfies the following conditions:
\begin{enumerate}
  \item $c_{ij}\le 1$ and $c_{ij}=0$ for $i\gg 0$ or $j\gg 0$;
\item if $c_{ij}\le 0$, then $c_{rs}\le 0$ for any $(r,s)\ge (i,j)$;
\item for every $(i,j)$ $0\le \sum_{t=0}^j c_{it}\le
  \sum_{t=0}^jc_{i-1t}$ and $0\le \sum_{t=0}^i c_{tj}\le \sum_{t=0}^i c_{tj-1}$.
  \end{enumerate} 
\end{Def}

\begin{Thm}[{\cite[Theorem 2.11]{GMR}}]  \label{T0}
  If $X\subset Q$ is  a $0$-dimensional scheme, then $M_X$ is an
  admissible matrix.
\end{Thm}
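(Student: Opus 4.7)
The plan is to verify the three admissibility conditions on $\Delta M_X=(c_{ij})$ by combining multiplication-by-hyperplane arguments with the Koszul complex. I choose general forms $L, L' \in S_{1,0}$ and $M, M' \in S_{0,1}$ such that the zero loci of $L$ and $M$ in $Q$ are disjoint from $X$ and such that $(L, L') = (x_0, x_1)$ and $(M, M') = (y_0, y_1)$ as ideals of $S$. Since $X$ is zero-dimensional and $I(X)$ is saturated, the associated primes of $S(X)$ correspond to the finitely many points of $X$; hence for a generic choice $L$ and $M$ lie outside every associated prime and are non-zerodivisors on $S(X)$.

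For the ``$c_{ij}=0$ for $i\gg 0$ or $j\gg 0$'' half of condition (1), for each fixed $j$ the function $i\mapsto M_X(i,j)$ is the Hilbert function of a finitely generated bi-graded module restricted to a column, hence stabilizes in $i$, and symmetrically in $j$; so $a_{ij}$, $b_{ij}$ and therefore $c_{ij}$ vanish for $i$ or $j$ large. For condition (3), injectivity of multiplication by $L$ gives
\[
0 \to S(X)(-1,0) \xrightarrow{\cdot L} S(X) \to S(X)/L \to 0,
\]
so $b_{ij} = \dim_k (S(X)/L)_{ij} \ge 0$, and analogously $a_{ij} \ge 0$. For $b_{ij} \le b_{i-1,j}$: every monomial of $S_{ij}$ lies in $(L, L') \cdot S_{i-1,j}$, whence multiplication by $L'$ induces a surjection $(S(X)/L)_{i-1,j} \twoheadrightarrow (S(X)/L)_{i,j}$. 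The symmetric argument using $M,M'$ yields $a_{ij} \le a_{i,j-1}$.

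For the bound $c_{ij}\le 1$, apply the Koszul complex
\[
0 \to S(X)(-1,-1) \to S(X)(-1,0) \oplus S(X)(0,-1) \to S(X) \to 0
\]
on $L, M$; its Euler characteristic in bi-degree $(i,j)$ is exactly $c_{ij}$. Since $L$ is a non-zerodivisor the top Koszul homology vanishes, so
\[
c_{ij} = \dim_k (S(X)/(L,M))_{ij} - \dim_k H_1(L,M; S(X))_{ij} \le \dim_k (S(X)/(L,M))_{ij}.
\]
The intersection of the zero loci of $L$ and $M$ in $Q$ is a single reduced point $P\notin X$, so $S(X)/(L,M)$ is a bi-graded quotient of the coordinate ring $S/(L,M)$ of $\{P\}$. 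After a change of coordinates $S/(L,M)\cong k[x_0,y_0]$, whose $(i,j)$-graded piece is one-dimensional for every $(i,j)\ge(0,0)$; hence $\dim_k(S(X)/(L,M))_{ij}\le 1$ and $c_{ij}\le 1$.

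For condition (2), the plan is to propagate $c_{ij}\le 0$ to every $(r,s)\ge(i,j)$ by induction along rows and columns. In the $j$-direction, work with $(S(X)/L)_{i,*}$ as a finitely generated graded $k[y_0,y_1]$-module and show that once the first differences of its Hilbert function turn non-positive past the generation degree they cannot rise again, since multiplication by a generic $(0,1)$-form becomes surjective past that threshold; in the $i$-direction, use the surjection $(S(X)/L)_{i-1,*}\twoheadrightarrow(S(X)/L)_{i,*}$ induced by $L'$ to transfer the non-positivity upward. This propagation step is the hardest part of the argument: while (1) and (3) follow cleanly from the Koszul complex and one-dimensional generic reductions, condition (2) encodes a genuinely two-dimensional rigidity that requires a careful interleaving of the row and column structures to establish.
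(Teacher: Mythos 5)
This theorem is not proved in the paper at all: it is quoted from \cite{GMR} (Theorem 2.11), so there is no internal argument to measure you against, and your attempt has to stand on its own. Most of it does: a general $L\in S_{1,0}$ (resp.\ $M\in S_{0,1}$) misses $X$ and, because $I(X)$ is saturated, lies outside every associated prime of $S(X)$, so $b_{ij}=\dim_k(S(X)/L)_{ij}\ge 0$; the identity $S_{ij}=(L,L')S_{i-1,j}$ for $i\ge 1$ does give the surjection $(S(X)/L)_{i-1,j}\twoheadrightarrow (S(X)/L)_{ij}$ and hence $b_{ij}\le b_{i-1,j}$, with the symmetric statements for $a_{ij}$; and the Koszul Euler characteristic, together with $\dim_k(S(X)/(L,M))_{ij}\le\dim_k(S/(L,M))_{ij}=1$ and the vanishing of $H_2$, yields $c_{ij}\le 1$. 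One small repair: your reason for $c_{ij}=0$ when $i\gg0$ or $j\gg0$ (``the column of the Hilbert function of a f.g.\ bigraded module stabilizes'') is not a reason --- it fails for $S$ itself; what works is that $m_{ij}\le\deg X$ and $b_{ij}$ is non-increasing in $i$ by your condition (3), so $b_{ij}=0$ for all $i\ge\deg X$ uniformly in $j$, and symmetrically for $a_{ij}$.

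The genuine gap is condition (2), which is the real content of the theorem (it is exactly what the rest of the paper uses, e.g.\ in Propositions \ref{P:1} and \ref{P:6}), and for it you offer only a plan whose central principle is false at the stated level of generality. For a finitely generated graded $k[y_0,y_1]$-module it is not true that first differences of the Hilbert function, once non-positive past the generation degrees, cannot rise again: $N=k\oplus k[y_0,y_1]$, generated entirely in degree $0$, has Hilbert function $2,2,3,4,\dots$, with first differences $0,1,1,\dots$. What is true is persistence of surjectivity (if $N$ is generated in degrees $\le j-1$ and $\cdot y\colon N_{j-1}\to N_j$ is onto, it stays onto in higher degrees, and since $\cdot y$ commutes with your surjection $\cdot L'$ this surjectivity also passes from the slice $(S(X)/L)_{i,*}$ to $(S(X)/L)_{i+1,*}$); note that your in-$i$ step as written --- transferring ``non-positivity'' through a surjection --- does not work either, only surjectivity transfers. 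So the whole argument hinges on the implication: $c_{ij}\le 0$, i.e.\ the mere inequality $b_{ij}\le b_{i,j-1}$, forces $\cdot y\colon (S(X)/L)_{i,j-1}\to(S(X)/L)_{i,j}$ to be surjective for a general $(0,1)$-form $y$ (together with control of the generation degrees of the slice modules). That is a maximal-rank, weak-Lefschetz-type assertion which is false for general finitely generated modules and must exploit the specific structure of $S(X)$ --- the saturation of $I(X)$, equivalently the geometry of the finite scheme $X$ --- which your sketch never invokes. Until that implication is proved, the ``two-dimensional rigidity'' you yourself flag as the hard part remains unproved, and the proposal is not yet a proof of the theorem.
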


If $X\subset Q$ is a zero-dimensional scheme, then $2\le
\operatorname{depth}S(X)\le 3$.
\begin{Def}
 A zero-dimensional scheme $X\subset Q$ is called arithmetically
 Cohen-Macaulay (ACM) if $\operatorname{depth}S(X)=2$.
\end{Def}

\begin{Thm}[{\cite[Theorem 4.1]{GMR}}]
A zero-dimensional scheme  $X\subset Q$ is ACM if and only if
$c_{ij}\ge 0$ for any $(i,j)$.
\end{Thm}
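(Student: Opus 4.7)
The plan rests on choosing a generic $(1,0)$-form $L$ and a generic $(0,1)$-form $M$ and studying the Artinian quotient $S(X)/(L,M)S(X)$. The main identity I would establish first is that, whenever $L, M$ is a regular sequence on $S(X)$, the two short exact sequences coming from multiplication by $L$ and then by $M$ give, in bidegree $(i,j)$,
\[
\dim_k\bigl(S(X)/(L,M)S(X)\bigr)_{i,j} = m_{ij} - m_{i-1,j} - m_{i,j-1} + m_{i-1,j-1} = c_{ij}.
\]
Since $X$ is zero-dimensional and $I(X)$ is saturated, every associated prime of $S(X)$ has height $2$, so a generic $(1,0)$-form $L$ avoids them all and is a non-zero-divisor on $S(X)$.

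Forward direction. Assume $S(X)$ is ACM, so $\operatorname{depth}S(X)=2$. Then $\operatorname{depth}\bigl(S(X)/LS(X)\bigr)=1$, so a generic $(0,1)$-form $M$ is a non-zero-divisor on the quotient, and $L,M$ is a regular sequence. The identity above immediately gives $c_{ij}=\dim_k\bigl(S(X)/(L,M)S(X)\bigr)_{i,j}\ge 0$.

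Converse direction. Assume $c_{ij}\ge 0$ for all $(i,j)$; the aim is to produce a regular sequence of length $2$. Take $L$ generic as before, set $N:=S(X)/LS(X)$, pick a generic $(0,1)$-form $M$, and let $K:=\ker\bigl(N\xrightarrow{\cdot M} N(0,1)\bigr)$. The exact-sequence computation, now with a correction from the kernel, yields
\[
\dim_k\bigl(S(X)/(L,M)S(X)\bigr)_{i,j} = c_{ij} + \dim_k K_{i,j-1},
\]
so $\dim_k\bigl(S(X)/(L,M)\bigr)_{i,j}\ge c_{ij}$ pointwise, with equality everywhere iff $K=0$, i.e., iff $M$ is a non-zero-divisor on $N$, which is exactly what is needed for ACM.

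The main obstacle is to force $K=0$, and my plan is a global length comparison. For generic $L,M$ the pair is a homogeneous system of parameters on the Krull-dimension-$2$ ring $S(X)$, so $S(X)/(L,M)$ is Artinian. By Serre's inequality applied in this bigraded setting, its length is bounded below by the (mixed) multiplicity $e\bigl((L,M);S(X)\bigr)=\deg X$, with equality if and only if $L,M$ is a regular sequence. On the other hand, summing the pointwise identity above and using the telescoping identity $\sum_{i,j}c_{ij}=\lim_{i,j\to\infty}m_{ij}=\deg X$ gives
\[
\operatorname{length}\bigl(S(X)/(L,M)\bigr) = \deg X + \sum_{i,j}\dim_k K_{i,j}.
\]
Comparing the two equalities forces $\sum_{i,j}\dim_k K_{i,j}=0$, hence $K=0$, and $S(X)$ is ACM. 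The delicate step is the multiplicity identity $e\bigl((L,M);S(X)\bigr)=\deg X$ in the bigraded setting, which I would handle either by a direct mixed-multiplicity computation using that $L=0$ and $M=0$ cut out generic $(1,0)$- and $(0,1)$-lines disjoint from $X$, or by bypassing it entirely through the cohomological characterization $H^1_{\mathfrak{m}}\bigl(S(X)\bigr)=0$ for ACM.
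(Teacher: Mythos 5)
Note first that the paper does not prove this statement at all; it is quoted from \cite[Theorem 4.1]{GMR}, so your argument can only be judged on its own terms. Your forward direction is fine: for a reduced-to-the-point of genericity choice of $L$ and $M$ the pair is a regular sequence on the Cohen--Macaulay ring $S(X)$, and the two short exact sequences give $\dim_k\bigl(S(X)/(L,M)\bigr)_{i,j}=c_{ij}\ge 0$. The converse, however, has a genuine gap: Serre's bound goes the wrong way for your purposes. For a parameter ideal of the two-dimensional ring $S(X)$ one has $\ell\bigl(S(X)/(L,M)\bigr)\ge e\bigl((L,M);S(X)\bigr)=\deg X$, a \emph{lower} bound on the length, with equality precisely when $S(X)$ is Cohen--Macaulay. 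Your bookkeeping identity $\ell\bigl(S(X)/(L,M)\bigr)=\deg X+\sum_{i,j}\dim_k K_{i,j}$ is correct, but combining it with a lower bound only yields $\sum_{i,j}\dim_k K_{i,j}\ge 0$, which is vacuous; no contradiction with $K\ne 0$ arises. To force $K=0$ you would need the reverse inequality $\ell\bigl(S(X)/(L,M)\bigr)\le\deg X$, and that is exactly equality in Serre's bound, i.e.\ the Cohen--Macaulayness you are trying to establish; the comparison is circular.

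The symptom that the step fails is that your converse never genuinely uses the hypothesis $c_{ij}\ge 0$: the pointwise identity $\dim_k\bigl(S(X)/(L,M)\bigr)_{i,j}=c_{ij}+\dim_k K_{i,j-1}$, the telescoping $\sum_{i,j}c_{ij}=\deg X$, and Serre's lower bound all hold for \emph{every} zero-dimensional scheme in $Q$; if the comparison proved $K=0$ it would prove that every such scheme is ACM, which is false (two points lying on no common $(1,0)$- or $(0,1)$-line have $c_{11}=-1$, hence are not ACM by your own forward direction). A correct converse must use $c_{ij}\ge 0$, together with the admissibility of $M_X$, at the level of the module $N=S(X)/LS(X)$ itself, e.g.\ by showing that $N$ has no nonzero submodule of finite length (equivalently, that its zeroth local cohomology at the irrelevant maximal ideal vanishes), so that a general $(0,1)$-form is a nonzerodivisor on it; the closing remark about replacing the multiplicity computation by the criterion $H^1$ of $S(X)$ vanishing does not supply this missing implication, since the issue is not the identity $e\bigl((L,M);S(X)\bigr)=\deg X$ (which is correct) but the direction of the inequality.
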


Given an admissible matrix $M$, we define:
\begin{equation}
  \label{eq:6}
  T=\{(i,j)\in \mathbb N\times \mathbb N\mid c_{ij}<0\}.
\end{equation}
Then for any $(i,j)\in T$ we set: 
\begin{equation}
  \label{eq:9}
  I_{ij}=\{0,\dots,-c_{ij}-1\}.
\end{equation}

\begin{Rem} \label{rm}
If $X\subset Q$ is a $0$-dimensional scheme, let us consider $a=\min\{i\in \mathbb
N\mid I(X)_{i,0}\ne 0\}-1$ and $b=\min\{j\in \mathbb
N\mid I(X)_{0,j}\ne 0\}-1$. Then by Theorem \ref{T0} $\Delta M_X$ is zero out of the
rectangle with opposite vertices $(0,0)$ and $(a,b)$, because $c_{a+10}=c_{0b+1}=0$. In this case we
say that $\Delta M_X$ is of size $(a,b)$.
\end{Rem}

Let $X\subset Q$ be a zero-dimensional scheme and let $L$ be a line
defined by a form $l$. Let $J=(I(X),l)$ and let $d=\deg(\operatorname{sat} J)$. Then we call $d$ the number of points
of $X$ on the line $L$ and, by abuse of notation, we define
$d=\#(X\cap L)$. We say that $L$ is disjoint from $X$ if $d=0$. 

For any $i\ge 0$ we set $j(i)=\min\{t\in \mathbb N\mid
m_{it}=m_{it+1}\}$ and similarly for any $j\ge 0$ we set $i(j)=\min\{t\in \mathbb N\mid m_{tj}=m_{t+1j}\}$.

\begin{Thm}[{\cite[Theorem 2.12]{GMR}}]  \label{T}
  Let $X\subset Q$ be a zero-dimensional scheme and let $M_X=(m_{ij})$
  be its Hilbert matrix. Then for every $j\ge 0$ there are just
  $a_{i(0)j}-a_{i(0)j+1}$ lines of type $(1,0)$ each containing just
  $j+1$ points of $X$ and, similarly, for every $i\ge 0$ there are
  just $b_{ij(0)}-b_{i+1j(0)}$ lines of type $(0,1)$ each containing
  just $i+1$ points of $X$.
\end{Thm}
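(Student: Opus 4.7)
The plan is to prove the statement about $(1,0)$-lines; the $(0,1)$-line statement follows by exchanging the two factors of $Q$. Write $A_1,\dots,A_r \in \mathbb P^1$ for the distinct first coordinates of the points of $X$, and let $n_\alpha$ denote the length of $X_\alpha := X \cap L_{A_\alpha}$, viewed as a subscheme of $L_{A_\alpha} \cong \mathbb P^1$; the goal is to prove $a_{i(0),j} - a_{i(0),j+1} = \#\{\alpha : n_\alpha = j+1\}$. As a first observation, $I(X)_{i,0}$ consists of polynomials of degree $i$ vanishing on the $r$ distinct points $A_1,\dots,A_r$, so $m_{i,0} = \min(i+1,r)$ and hence $i(0) = r-1$.

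The heart of the argument is the computation of the row $m_{i(0),j}$ via a fiber-separation argument. Lagrange interpolation on the $r$ distinct points $A_\alpha$ produces polynomials $p_\alpha \in k[x_0,x_1]_{r-1}$ with $p_\alpha(A_\beta) = \delta_{\alpha\beta}$. For any $g_\alpha \in k[y_0,y_1]_j$, the bihomogeneous form $\sum_\alpha p_\alpha(x)\,g_\alpha(y)$ has bidegree $(r-1,j)$ and restricts to $g_\beta|_{X_\beta}$ on each $X_\beta$. This shows that the restriction map
\[
S(X)_{i(0),j} \longrightarrow \bigoplus_{\alpha=1}^{r} H^0(\mathscr O_{X_\alpha})
\]
is an isomorphism onto $\bigoplus_\alpha \operatorname{Image}\bigl(k[y_0,y_1]_j \to H^0(\mathscr O_{X_\alpha})\bigr)$, where the image in the $\alpha$-th factor has dimension equal to the Hilbert function of $X_\alpha \subset \mathbb P^1$ in degree $j$, namely $\min(j+1,n_\alpha)$. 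Consequently
\[
m_{i(0),j} = \sum_{\alpha=1}^{r} \min(j+1,n_\alpha).
\]

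The statement then follows by telescoping. Since $\min(j+1,n_\alpha) - \min(j,n_\alpha)$ equals $1$ when $n_\alpha \ge j+1$ and $0$ otherwise,
\[
a_{i(0),j} = \#\{\alpha : n_\alpha \ge j+1\}, \qquad a_{i(0),j+1} = \#\{\alpha : n_\alpha \ge j+2\},
\]
whose difference is exactly $\#\{\alpha : n_\alpha = j+1\}$. The only nontrivial step is the Lagrange-based fiber separation, and the delicate point is that it succeeds at the sharp value $i(0) = r-1$ (rather than merely for $i \gg 0$), because interpolation at $r$ points is feasible precisely with polynomials of degree $r-1$; once the stable row $m_{i(0),\cdot}$ is pinned down, the rest is routine bookkeeping with the first difference.
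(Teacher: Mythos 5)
The paper does not prove this statement at all: it is quoted verbatim from \cite[Theorem 2.12]{GMR}, so there is no internal argument to compare yours with. Judged on its own, your argument is a correct and clean proof in the case where $X$ is a \emph{reduced} zero-dimensional scheme: the identification $m_{i,0}=\min(i+1,r)$, hence $i(0)=r-1$, the Lagrange/fiber-separation computation giving $m_{i(0),j}=\sum_\alpha\min(j+1,n_\alpha)$, and the telescoping of $a_{i(0),j}$ are all sound, and the injectivity of the restriction map follows from $I(X)$ being saturated.

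The gap is that the theorem is stated (and used in this paper, e.g.\ in Step 2--3 of the proof of Theorem \ref{T:1}, where $X$ need not be reduced) for arbitrary zero-dimensional schemes, and your two key steps fail in that generality. If $X$ is the fat point with ideal $(x_1,y_1)^2$, the scheme-theoretic image of the first projection is non-reduced, so $m_{i,0}=\min(i+1,\deg\pi_1(X))$ rather than $\min(i+1,r)$ and $i(0)=\deg\pi_1(X)-1>r-1$; moreover $X$ is not the disjoint union of its fiber subschemes $X\cap L_{A_\alpha}$ (the fat point has length $3$ while its fiber has length $2$), so the map $S(X)_{i(0),j}\to\bigoplus_\alpha H^0(\mathscr O_{X_\alpha})$ is no longer injective and does not compute $m_{i(0),j}$. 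In fact, with the paper's definition of $\#(X\cap L)$ via $\deg(\operatorname{sat}(I(X),l))$, the same example shows the literal statement needs the conventions of \cite{GMR} (lines of the minimal $(d,0)$-curve counted with multiplicity): one gets $a_{i(0)0}-a_{i(0)1}=1$ although no $(1,0)$-line contains exactly one point of $X$, the unique relevant line carrying two. So either restrict your proof explicitly to sets of points, or rework it in terms of the non-reduced divisor $\pi_1(X)\subset\mathbb P^1$ and the residual/multiplicity bookkeeping of \cite{GMR}; as written, the non-reduced case — which this paper actually invokes — is not covered.
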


Now we recall the following definition:
\begin{Def}
  Let $X\subset Q$ be a zero-dimensional scheme and let $P\in X$. The
  multiplicity of $X$ in $P$, denoted by $m_X(P)$, is the length of
  $\mathscr O_{X,P}$.
\end{Def}

Given $P\in Q$, we denote by $I_P$ the maximal ideal of $S$ associated
to $P$. If $X\subset Q$ is a $0$-dimensional scheme,  then
$I(X)=\cap_{P'\in X} J_{P'}$ for some ideal $J_{P'}$ such that $\sqrt{J_{P'}}=I_{P'}$.

\begin{Def}
  Given a zero-dimensional scheme $X\subset Q$ and $P\in X$ such that
  $m_X(P)=1$, we say that $f\in S$ is a \emph{separator} for $P\in X$ if $f(P)\ne 0$ and
  $f\in \cap_{P'\in X\setminus \{P\}}J_{P'}$.
\end{Def}

This definition generalizes the definition of a separator for a point
in a reduced zero-dimensional scheme in a multiprojective space given
by \cite{GVT2}.

\section{Separators and Hilbert functions}  \label{sec:sep}

Let $X\subset Q$ be a zero-dimensional scheme and let $M_X$ be its Hilbert
matrix. In all this paper we suppose that $\Delta M_X$ is of size
$(a,b)$ and we denote by $R_0$,\dots, $R_a$
and $C_0$,\dots,$C_b$, respectively, the
$(1,0)$ and $(0,1)$-lines containing $X$ and each one at least one point of
$X$. 

\begin{Thm}  \label{T:0}
  Let $P=R_h\cap C_k\in X$ for some $h\in \{0,\dots,a\}
$  and $k\in \{0,\dots,b\}
$ and suppose that $m_X(P)=1$. Let $Z=X\setminus\{P\}$, $p=\#(Z\cap R_h)$ and $q=\#(Z\cap
C_k)$. If there exists a separator in degree $(q,p)$ for $P\in X$,
then:
 \[
\Delta M_Z^{(i,j)}=
\begin{cases}
 \Delta M_X^{(i,j)} & \text{if } (i,j)\ne (q,p)\\
 \Delta M_X^{(i,j)}-1 & \text{if } (i,j)=(q,p).
\end{cases}
\]
\end{Thm}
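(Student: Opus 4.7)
The plan is to analyze the difference function $D(i,j) := M_X^{(i,j)} - M_Z^{(i,j)}$ and reduce the statement to a pointwise computation of $\Delta D$. Since $m_X(P)=1$, a form $f\in I(Z)_{i,j}$ lies in $I(X)_{i,j}$ if and only if it vanishes at $P$; the evaluation at $P$ on $I(Z)_{i,j}$ has kernel $I(X)_{i,j}$ and image of dimension $0$ or $1$. Hence $D(i,j) = \dim I(Z)_{i,j} - \dim I(X)_{i,j} \in \{0,1\}$, and it equals $1$ precisely when a separator for $P\in X$ exists in bidegree $(i,j)$.

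Next I would determine the set of bidegrees carrying a separator. Given the separator $f_0$ in bidegree $(q,p)$ provided by the hypothesis, for any $(i,j)\ge (q,p)$ one can multiply $f_0$ by a bihomogeneous form of bidegree $(i-q,j-p)$ not vanishing at $P$ (which exists since $k$ is infinite), obtaining a separator in bidegree $(i,j)$.

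The main obstacle is the converse: ruling out separators in bidegrees $(i,j)\not\ge(q,p)$. Suppose $f$ is such a separator with, say, $j<p$ (the case $i<q$ is symmetric, restricting to $C_k$). The line $R_h$ is defined by a bidegree $(1,0)$-form, so the restriction $f|_{R_h}$ is a form of degree $j$ on $R_h\cong\mathbb{P}^1$ vanishing on the $p$ points of $Z\cap R_h$; since $j<p$, this forces $f|_{R_h}\equiv 0$, contradicting $f(P)\ne 0$. It follows that $D$ is the indicator function of $\{(i,j):i\ge q,\,j\ge p\}$.

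Finally, because $D$ factors as the product $\chi_{i\ge q}\,\chi_{j\ge p}$, one gets $\Delta D(i,j) = (\chi_{i\ge q}-\chi_{i-1\ge q})(\chi_{j\ge p}-\chi_{j-1\ge p}) = \chi_{i=q}\,\chi_{j=p}$, which equals $1$ at $(q,p)$ and $0$ elsewhere. Since $\Delta M_X - \Delta M_Z = \Delta D$, this matches the formula in the statement.
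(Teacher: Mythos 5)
Your proof is correct and follows essentially the same route as the paper: the exact sequence (equivalently, your evaluation map at $P$) bounds the difference $D(i,j)$ by $1$, the restriction-to-$R_h$ (resp.\ $C_k$) argument kills separators when $j<p$ (resp.\ $i<q$), and existence at $(q,p)$ propagates to all $(i,j)\ge(q,p)$, so $M_X-M_Z$ is the indicator of the quadrant and the first difference drops only at $(q,p)$. Your explicit multiplication of the separator by a form not vanishing at $P$ just spells out a step the paper leaves implicit when it deduces $h^0\mathscr I_Z(i,j)=h^0\mathscr I_X(i,j)+1$ for $(i,j)\ge(q,p)$.
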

\begin{proof}
It is easy to see that  $\Delta M_Z^{(i,j)}=\Delta M_X^{(i,j)}$ for any $(i,j)$ with either
   $i<q$ or $j<p$. Indeed, taken $(i,j)$ with $i< q$ any $(i,j)$-curve containing $Z$ must
contain $C_k$ and so $h^0\mathscr
I_Z(i,j)=h^0\mathscr I_X(i,j)$ and $\Delta M_Z^{(i,j)}=\Delta
M_X^{(i,j)}$. The proof works in a similar way if $j<p$.

By the exact sequence:
\begin{equation}  \label{eq:14}
0\rightarrow \mathscr I_X\rightarrow \mathscr I_Z\rightarrow \mathscr
O_P\rightarrow 0
\end{equation}
we see that $h^0\mathscr I_Z(q,p)>h^0\mathscr I_X(q,p)$ if and
only if $h^0\mathscr I_Z(q,p)=h^0\mathscr I_X(q,p)+1$. This means
that it must be:
\[
\Delta M_Z^{(q,p)}=\Delta M_X^{(q,p)}-1.
\]
Now we only need to prove that  $\Delta M_Z^{(i,j)}=\Delta M_X^{(i,j)}$ for any $(i,j)>(q,p)$.
By \eqref{eq:14} we see that for any $(i,j)$:
\begin{equation}  \label{eq:15}
h^0\mathscr I_X(i,j)\le h^0\mathscr I_Z(i,j)\le h^0\mathscr I_X(i,j)+1
\end{equation}
which is equivalent to:
\[
M_X^{(i,j)}-1\le M_Z^{(i,j)}\le M_X^{(i,j)}.
\]
Since  $h^0\mathscr I_Z(q,p)=h^0\mathscr I_X(q,p)+1$,  
by \eqref{eq:15} we see that it must be $h^0\mathscr I_Z(i,j)=h^0\mathscr I_X(i,j)+1$
for any $(i,j)\ge (q,p)$. In particular this means that
$M_Z^{(i,j)}=M_X^{(i,j)}-1$ for any $(i,j)\ge (q,p)$. Now the conclusion follows easily.
\end{proof}

\begin{Thm}  \label{T:1}
Let $P=R_h\cap C_k\in X$ for some $h\in \{0,\dots,a\}
$  and $k\in \{0,\dots,b\}
$ such that $m_X(P)=1$ and let $p+1=\#(X\cap R_h)$ and $q+1=\#(X\cap
C_k)$. Suppose that one of the following conditions holds:
\begin{enumerate}
\item $p=b$;
\item $q=a$;
\item $p<b$, $q<a$ and $\Delta
  M_X^{(i,j)}=0$ for any $(i,j)\ge (q+1,p+1)$.
\end{enumerate}
Then there exists a separator for $P\in X$ in degree $(q,p)$.
\end{Thm}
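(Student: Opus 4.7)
The proof divides naturally into three parts corresponding to the three hypotheses. In cases (1) and (2) I would build the separator explicitly as a product of linear forms on $(1,0)$- and $(0,1)$-lines; in case (3), the naive construction fails and I would argue by contradiction via a Hilbert-function analysis using the admissibility of $\Delta M_Z$, where $Z=X\setminus\{P\}$.

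For case (1), label $(X\cap R_h)\setminus\{P\}=\{P_1,\dots,P_p\}$ with $P_j\in C_{k_j}$, $k_j\ne k$, and $(X\cap C_k)\setminus\{P\}=\{Q_1,\dots,Q_q\}$ with $Q_i\in R_{h_i}$, $h_i\ne h$; let $l_{C_{k_j}}$ and $l_{R_{h_i}}$ denote the corresponding linear forms. The candidate separator is
\[
F=\prod_{j=1}^{p} l_{C_{k_j}}\cdot\prod_{i=1}^{q} l_{R_{h_i}},
\]
of bi-degree $(q,p)$. Then $F(P)\ne 0$ since $P$ lies on none of these lines; $F$ vanishes at each $P_j$ and each $Q_i$ by construction; and for any remaining point $P'\in X\setminus(R_h\cup C_k)$, $P'$ lies on some $(0,1)$-line $C_{j'}$ with $j'\ne k$, which under the hypothesis $p=b$ must coincide with one of the $C_{k_j}$, so $F(P')=0$. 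Case (2) is handled symmetrically by exchanging the roles of the two factors.

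Case (3) is the main obstacle. Set $\epsilon(i,j)=M_X^{(i,j)}-M_Z^{(i,j)}$. By the argument in the proof of Theorem~\ref{T:0}, $\epsilon(i,j)\in\{0,1\}$, vanishes for $i<q$ or $j<p$, is non-decreasing in each argument, and equals $1$ for all sufficiently large $(i,j)$ since $\deg Z=\deg X-1$; moreover a separator of degree $(q,p)$ exists if and only if $\epsilon(q,p)=1$. Assume for contradiction that $\epsilon(q,p)=0$, so that every minimum of the upward-closed set $S=\{\epsilon=1\}$ satisfies $i\ge q+1$ or $j\ge p+1$. From $\Delta M_Z=\Delta M_X-\Delta\epsilon$ and the hypothesis $\Delta M_X^{(i,j)}=0$ for $(i,j)\ge(q+1,p+1)$, a minimum $(i_1,j_1)\in S$ lying inside the case (3) region forces $\Delta M_Z^{(i_1,j_1)}=-1$, while any ``inner corner'' of the staircase of $S$ inside the same region would give $\Delta M_Z=+1$, violating condition (2) of admissibility for $\Delta M_Z$. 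The principal technical obstacle is the case analysis for minima lying on the boundary edges of the case (3) region (namely $i_1=q$ with $j_1\ge p+1$, or $j_1=p$ with $i_1\ge q+1$), where $\Delta M_X$ need not vanish: here one must combine conditions (2) and (3) of admissibility of $\Delta M_Z$ with the row-- and column--sum inequalities in (3) applied to $\Delta M_X$ in order to rule out each configuration compatible with $\epsilon(q,p)=0$.
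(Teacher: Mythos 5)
Your constructions for cases (1) and (2) are fine when $X$ is reduced, and they are a genuinely more explicit route than the paper, which never exhibits the separator but instead proves $h^0\mathscr I_Z(q,p)=h^0\mathscr I_X(q,p)+1$ for $Z=X\setminus\{P\}$ by a counting argument. Note, however, that the theorem only assumes $m_X(P)=1$: at the other points $X$ may be non-reduced, in which case a separator must lie in the (possibly non-radical) ideals $J_{P'}$, and your product $F$ of reduced linear forms need not do so; moreover the step ``every column other than $C_k$ coincides with one of the $C_{k_j}$'' uses that $X\cap R_h$ consists of $p+1$ distinct reduced points. This is a secondary issue compared with case (3).

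In case (3) there is a genuine gap: the ingredient you never invoke, and which carries the paper's proof, is Theorem \ref{T}, i.e. the comparison of the distributions of points of $Z$ and of $X$ on the $(1,0)$- and $(0,1)$-lines. Passing from $X$ to $Z$ changes only the count on $R_h$ (from $p+1$ to $p$) and on $C_k$ (from $q+1$ to $q$); via Theorem \ref{T} this forces (Step \ref{s:3} of the paper, together with its column analogue) that the row sums of $\Delta M_Z$ drop below those of $\Delta M_X$ exactly in row $q$ and the column sums exactly in column $p$ --- equivalently, your staircase $S=\{\epsilon=1\}$ must have a minimal element with first coordinate $q$ and one with second coordinate $p$. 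The tools you name instead (conditions (2) and (3) of admissibility for $\Delta M_Z$ plus the row- and column-sum inequalities for $\Delta M_X$) are purely numerical and provably insufficient: take $X$ the ACM ``triangle'' with $\Delta M_X$ having rows $(1,1,1)$, $(1,1,0)$, $(1,0,0)$ and $P=R_1\cap C_1$, so $p=q=1$ and hypothesis (3) holds; then $M'=M_X-\mathbf 1_{\{i\ge 1,\ j\ge 2\}}$ is admissible, agrees with $M_X$ whenever $i<q$ or $j<p$, has total degree $\deg X-1$, and still satisfies hypothesis (3), yet it has $\epsilon(q,p)=0$; only the actual line counts of $Z$ (Theorem \ref{T}) exclude it. Relatedly, your inner-corner contradiction requires at least two minimal elements of $S$ (a single minimum produces no inner corner dominating a non-positive entry), and it is precisely the Theorem \ref{T} counting that, when $\epsilon(q,p)=0$, yields two distinct minima (one in row $q$, one in column $p$) and hence the clash with condition (2). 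Once you add this counting step, your staircase argument does close case (3), essentially as the paper's Step \ref{s:4} does.
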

\begin{proof}
We divide the proof in different steps. Let $Z=X\setminus\{P\}$.

\begin{step}  \label{s:2}
There exists $\overline j$ with $p\le \overline j\le b$ such that one
the following conditions holds:
\begin{enumerate}
\item  $\Delta M_Z^{(q,j)}=\Delta M_X^{(q,j)}$ for any $j< \overline j$ and
$\Delta M_Z^{(q,\overline j)}<\Delta M_X^{(q,\overline j)}$; 
\item $\Delta M_Z^{(q,j)}=\Delta M_X^{(q,j)}$ for any $p\le j\le b$.
\end{enumerate}
\end{step}

Since $Z\subset X$ we see that $M_Z^{(q,p)}\le
M_X^{(q,p)}$. Moreover, as we have seen in the proof of Theorem
\ref{T:0} $M_Z^{(i,j)}=M_X^{(i,j)}$ for any $i<q$ or $j<p$. This
implies that $\Delta M_Z^{(q,p)}\le \Delta M_X^{(q,p)}$.
If $\Delta M_Z^{(q,p)}=\Delta M_X^{(q,p)}$, then we
can repeat the previous procedure to show that $\Delta
M_Z^{(q,p+1)}\le \Delta M_X^{(q,p+1)}$. By iterating this
procedure we get the conclusion of Step \ref{s:2}.

\begin{step} \label{s:3}
The following equalities hold:
\begin{enumerate}
\item $\sum_{j=p}^b\Delta M_Z^{(q,j)}=\sum_{j=p}^b\Delta M_X^{(q,j)}-1$;
\item for any $i\in \{q+1,\dots,a\}$ $\sum_{j=p}^b\Delta
  M_Z^{(i,j)}=\sum_{j=p}^b\Delta M_X^{(i,j)}$.
\end{enumerate}
\end{step}
Let us first note that by Theorem \ref{T}: 
\[
  b_{q-1j(0)}(Z)-b_{qj(0)}(Z)=
\sum_{j\le b}
\Delta M_Z^{(q-1,j)}-\sum_{j\le b} \Delta M_Z^{(q,j)}
\]
is equal to the number of $(0,1)$-lines containing precisely $q$ points of
$Z$, while: 
\[  
  b_{q-1j(0)}(X)-b_{qj(0)}(X)=
\sum_{j\le b}
\Delta M_X^{(q-1,j)}-\sum_{j\le b} \Delta M_X^{(q,j)}
\]
is equal to the number of $(0,1)$-lines containing precisely $q$ points of
$X$. By hypothesis it must be:
\[
 b_{q-1j(0)}(Z)-b_{qj(0)}(Z)=
\sum_{j\le b}
\Delta M_Z^{(q-1,j)}-\sum_{j\le b} \Delta M_Z^{(q,j)}=\sum_{j\le b}
\Delta M_X^{(q-1,j)}-\sum_{j\le b} \Delta M_X^{(q,j)}+1
\]
Since $h^0\mathscr I_Z(i,j)=h^0\mathscr I_X(i,j)$ for any $i<q$ or
$j<p$, this implies that:
\begin{equation}
  \label{eq:12}
  \sum_{j\le b} \Delta M_Z^{(q,j)}=\sum_{j\le b} \Delta M_X^{(q,j)}-1.
\end{equation}
In a similar way we see that:
\[
 b_{qj(0)}(Z)-b_{q+1j(0)}(Z)=
\sum_{j\le b}
\Delta M_Z^{(q,j)}-\sum_{j\le b} \Delta M_Z^{(q+1,j)}=\sum_{j\le b}
\Delta M_X^{(q,j)}-\sum_{j\le b} \Delta M_X^{(q+1,j)}-1
\]
which implies by \eqref{eq:12} that $\sum_{j\le b} \Delta M_Z^{(q+1,j)}=\sum_{j\le b} \Delta M_X^{(q+1,j)}$.

Let us now suppose that for some $i\ge q+1$, with $i<a$, we have:
\begin{equation}
  \label{eq:4}
  \sum_{j\le b} \Delta M_Z^{(i,j)}=\sum_{i\le b} \Delta M_X^{(i,j)}.
\end{equation}
We will show that:
\begin{equation}
  \label{eq:5}
  \sum_{j\le b} \Delta M_Z^{(i+1,j)}=\sum_{j\le b} \Delta M_X^{(i+1,j)}.
\end{equation}
Again, by Theorem \ref{T} $\sum_{j\le b}
\Delta M_Z^{(i,j)}-\sum_{j\le b} \Delta M_Z^{(i+1,j)}$ is equal
to the number of $(0,1)$-lines containing precisely $i+1$ points of
$Z$, while $\sum_{j\le b}
\Delta M_X^{(i,j)}-\sum_{j\le b} \Delta M_X^{(i+1,j)}$ is equal
to the number of $(0,1)$-lines containing precisely $i+1$ points of
$X$. By hypothesis it must be:
\[
  \sum_{j\le b}
\Delta M_Z^{(i,j)}-\sum_{j\le b} \Delta M_Z^{(i+1,j)}=
\sum_{j\le b}
\Delta M_X^{(i,j)}-\sum_{j\le b} \Delta M_X^{(i+1,j)}.
\]
By \eqref{eq:4} it means that \eqref{eq:5} holds, so that $\sum_{j\le b} \Delta M_Z^{(i,j)}=\sum_{j\le b} \Delta M_X^{(i,j)}$
for any $i$ with $q+1\le i\le a$. 
\\

The statement of the theorem is proved if we show the following:

\begin{step} \label{s:4}
$h^0\mathscr I_Z(q,p)=h^0\mathscr I_X(q,p)+1$.
\end{step}

In the cases $p=b$ and $q=a$ by Step \ref{s:3} we easily get Step
\ref{s:4}. So from now on we suppose that $p<b$ and $q<a$. 

By Step \ref{s:2} and Step \ref{s:3} we see that there exists
$\overline j$ with $p\le \overline j\le b$ such that $\Delta M_Z^{(q,j)}=\Delta M_X^{(q,j)}$
for any $j< \overline j$ and $\Delta M_Z^{(q,\overline j)}<\Delta M_X^{(q,\overline j)}$.
Let us suppose that $\overline j\ge p+1$. Then $\Delta
M_Z^{(q,\overline j)}\le 0$ and by Theorem \ref{T0} we see that $\Delta M_Z^{(i,\overline j)}\le 0$ for any
$i\ge q$. By Step \ref{s:3} and by hypothesis we see that:
\[
\sum_{i=q+1}^a \Delta M_Z^{(i,\overline j)}=\Delta M_X^{(q,\overline j)}-\Delta M_Z^{(q,\overline j)}>0.
\]
So $\sum_{i=q+1}^a \Delta M_Z^{(i,\overline j)}>0$, but this
contradicts that fact that $\Delta M_Z^{(i,\overline j)}\le 0$ for any
$i\ge q$. This means that $\overline j=p$, i.e. $\Delta M_Z^{(q,p)}<\Delta M_X^{(q,p)}$.
Since, as we have seen, $h^0\mathscr I_Z(i,j)=h^0\mathscr
I_X(i,j)$ for any $(i,j)<(q,p)$, it gives us the inequality
$M_Z^{(q,p)}<M_X^{(q,p)}$, which means that $h^0\mathscr I_Z(q,p)>h^0\mathscr I_X(q,p)$. But by the exact sequence:
\[
0\rightarrow \mathscr I_X\rightarrow \mathscr I_Z\rightarrow \mathscr
O_P\rightarrow 0
\]
we see that $h^0\mathscr I_Z(q,p)>h^0\mathscr I_X(q,p)$ if and
only if $h^0\mathscr I_Z(q,p)=h^0\mathscr I_X(q,p)+1$ and the
statement is proved.
\end{proof}

\begin{Cor}   \label{C:1}
  Let $P=R_h\cap C_k\in X$ for some $h\in \{0,\dots,a\}
$  and $k\in \{0,\dots,b\}
$ such that $m_X(P)=1$. Given $Z=X\setminus\{P\}$, $p=\#(Z\cap R_h)$ and $q=\#(Z\cap
C_k)$, suppose that one of the following conditions holds:
\begin{enumerate}
\item $p=b$;
\item $q=a$;
\item $p<b$, $q<a$ and $\Delta
  M_X^{(i,j)}=0$ for any $(i,j)\ge (q+1,p+1)$.
\end{enumerate}
Then:
 \[
\Delta M_Z^{(i,j)}=
\begin{cases}
 \Delta M_X^{(i,j)} & \text{if } (i,j)\ne (q,p)\\
 \Delta M_X^{(i-1,j)}-1 & \text{if } (i,j)=(q,p).
\end{cases}
\]
\end{Cor}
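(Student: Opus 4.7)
The plan is to observe that Corollary \ref{C:1} is an immediate combination of the two preceding results. First I would check that the parameters $p,q$ as defined in the corollary agree with those appearing in Theorem \ref{T:1}: since $P\in R_h\cap C_k$ and $m_X(P)=1$, removing $P$ decreases the count on each of the two lines by exactly one, so $p=\#(Z\cap R_h)=\#(X\cap R_h)-1$ and $q=\#(Z\cap C_k)=\#(X\cap C_k)-1$, matching the definitions $p+1=\#(X\cap R_h)$ and $q+1=\#(X\cap C_k)$ used in Theorem \ref{T:1}. The three alternative hypotheses (1)--(3) of the corollary are then literally the same as the three hypotheses of Theorem \ref{T:1}.

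Next I would invoke Theorem \ref{T:1}: under any of the three conditions, there exists a separator for $P\in X$ in degree $(q,p)$. With the existence of that separator in hand, Theorem \ref{T:0} applies directly and gives the claimed formula
\[
\Delta M_Z^{(i,j)}=
\begin{cases}
 \Delta M_X^{(i,j)} & \text{if } (i,j)\ne (q,p),\\
 \Delta M_X^{(i,j)}-1 & \text{if } (i,j)=(q,p),
\end{cases}
\]
so the proof is reduced to a one-line citation of the two theorems (note what appears to be a typo $\Delta M_X^{(i-1,j)}-1$ in the stated case $(i,j)=(q,p)$; the intended expression coming from Theorem \ref{T:0} is $\Delta M_X^{(i,j)}-1$).

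Since both ingredients are already established, there is no genuine obstacle here; the only care needed is the bookkeeping step of reconciling the two conventions for $p$ and $q$ (one counted on $X$, the other on $Z$), after which the corollary follows formally.
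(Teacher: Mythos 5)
Your proof is correct and is essentially the paper's own argument: the authors likewise derive the corollary by combining Theorem \ref{T:1} (existence of the separator in degree $(q,p)$) with Theorem \ref{T:0}, after the same bookkeeping identification of $p$ and $q$. Your remark that the displayed case $(i,j)=(q,p)$ should read $\Delta M_X^{(i,j)}-1$ rather than $\Delta M_X^{(i-1,j)}-1$ is also right; that is a typo in the statement.
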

\begin{proof}
  The proof follows by Theorem \ref{T:0} and Theorem \ref{T:1}.
\end{proof}

\begin{Cor}  \label{C:2}
  Let $X$ be an ACM zero-dimensional scheme and let $P=R_h\cap C_k\in X$ for some $h\in \{0,\dots,a\}
$  and $k\in \{0,\dots,b\}
$ such that $m_X(P)=1$. Given $Z=X\setminus\{P\}$, $p=\#(Z\cap R_h)$ and $q=\#(Z\cap
C_k)$, we have:
\[
\Delta M_Z^{(i,j)}=
\begin{cases}
 \Delta M_X^{(i,j)} & \text{if } (i,j)\ne (q,p)\\
 \Delta M_X^{(i-1,j)}-1 & \text{if } (i,j)=(q,p).
\end{cases}
\]
\end{Cor}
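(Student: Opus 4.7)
The plan is to reduce directly to Corollary \ref{C:1}: since its conclusion is identical to the one stated here, it suffices to check that in the ACM setting one of the three hypotheses (1), (2), (3) of Corollary \ref{C:1} is automatic. The cases $p=b$ and $q=a$ match (1) and (2), so the substantive case is $p<b$ and $q<a$, where I must verify that $\Delta M_X^{(i,j)}=0$ for every $(i,j)\ge (q+1,p+1)$.

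Because $X$ is ACM we have $c_{ij}\ge 0$ everywhere, and admissibility forces $c_{ij}\le 1$, so $c_{ij}\in\{0,1\}$. Moreover admissibility condition (2) propagates $c\le 0$ upward and to the right, so combined with nonnegativity a single equation $c_{r,s}=0$ forces $c_{i,j}=0$ for all $(i,j)\ge (r,s)$. Thus the verification of condition (3) reduces to showing the single equation $c_{q+1,p+1}=0$.

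To establish this I would invoke the standard grid description of ACM zero-dimensional schemes in $Q$ (available in \cite{GMR}): after relabeling the lines $R_i$ and $C_j$ in decreasing order of the number of points of $X$ they contain, the points of $X$ are exactly $\{R_i\cap C_j\mid (i,j)\in\lambda\}$, where $\lambda$ is the partition with $\lambda_i=\sum_j c_{ij}$. Under this relabeling $R_h$ becomes some $R_{h'}$ with $\lambda_{h'}=p+1$ and $C_k$ becomes some $C_{k'}$ with $\lambda'_{k'}=q+1$; the condition $P\in X$ translates to $(h',k')\in\lambda$, which in particular forces $h'\le \lambda'_{k'}-1=q$. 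Monotonicity of $\lambda$ then gives $\lambda_{q+1}\le \lambda_{h'}=p+1$, so $(q+1,p+1)\notin\lambda$ and $c_{q+1,p+1}=0$ as required. The only step that is not a routine check is appealing to this grid description; everything else is bookkeeping, after which Corollary \ref{C:1} finishes the argument.
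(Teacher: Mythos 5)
Your reduction is the same as the paper's: both proofs consist of checking that hypothesis (3) of Corollary \ref{C:1} is automatic in the ACM case (the cases $p=b$, $q=a$ being hypotheses (1),(2)), i.e.\ that $\Delta M_X^{(i,j)}=0$ for all $(i,j)\ge(q+1,p+1)$. The paper obtains this vanishing by citing \cite[Proposition 4.1]{BM}; you instead derive it from the Ferrers-type grid structure of ACM point sets, and your bookkeeping there is correct: $c_{ij}\in\{0,1\}$ by ACM plus admissibility, condition (2) of admissibility propagates a single zero to all larger indices, and the partition argument ($h'\le q$ from $P\in X$, hence $\lambda_{q+1}\le\lambda_{h'}=p+1$) does give $c_{q+1,p+1}=0$. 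So the skeleton is identical and your verification of the key vanishing is a legitimate, essentially self-contained substitute for the citation.

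Two caveats. First, the corollary as stated assumes only that $X$ is an ACM zero-dimensional scheme with $m_X(P)=1$ at the one point being removed; $X$ may be non-reduced elsewhere. Your key input, ``the points of $X$ are exactly $\{R_i\cap C_j\mid (i,j)\in\lambda\}$,'' presupposes $X$ reduced, so as written your argument proves the corollary only in the reduced case (which is, admittedly, the only case used later in the paper, since Theorem \ref{T:4} applies it to the reduced grid scheme and its subsets). Second, the ``standard grid description'' is not stated in this form in \cite{GMR}; for reduced schemes it follows from Theorem \ref{T} (which pins down the two partitions of points per $(1,0)$- and $(0,1)$-line as mutually conjugate) together with the uniqueness part of Gale--Ryser, or from the characterization of reduced ACM sets in the Guardo--Van Tuyl literature, and your proof should either cite that or include this short derivation rather than attribute it to \cite{GMR}.
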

\begin{proof}
  By \cite[Proposition 4.1]{BM} we see that $\Delta M_X^{(i,j)}=0$ for
  any $(i,j)\ge (q+1,p+1)$. Then the conclusion follows by Corollary \ref{C:1}.
\end{proof}

In the following we slightly improve the result given in
\cite[Theorem 3.1]{BM}.
\begin{Cor}  
Let $R$ be a $(1,0)$-line disjoint from $X$. Let $C_{b+1}$,\dots,$C_n$,
$n\ge b$, be arbitrary $(0,1)$-lines and $i_1$,\dots,$i_r\in
\{0,\dots,b\}$. Let $\mathcal P=\{R\cap C_i\mid i\in\{0,\dots,n\},\,
i\ne i_1,\dots,i_r\}$ and let $W=X\cup \mathcal P$. Suppose also that on the $(0,1)$-line $C_{i_k}$
there are $q_k$ points of $X$ for $k=1,\dots,r$ and that $q_1\le
q_2\le \dots \le q_r$. Then, given
$T=\{(q_1,n),(q_2,n-1),\dots,(q_r,n-r+1)\}$, we have:
\[
\Delta M_W^{(i,j)}=
\begin{cases}
1 & \text{if } i=0,\, j\le n\\
0 & \text{if }i=0,\, j\ge n+1\\
 \Delta M_X^{(i-1,j)} & \text{if } i\ge 1\text{ and }(i,j)\notin T\\
 \Delta M_X^{(i-1,j)}-1 & \text{if } i\ge 1\text{ and }(i,j)\in T
\end{cases}
\]
if one of the following conditions holds:
\begin{enumerate}
\item $r=1$;
\item $r\ge 2$ and for any $k\in \{2,\dots,r\}$ and $i\ge q_k$\, $\Delta
  M_X^{(i,n-k+2)}=0$.
\end{enumerate}
\end{Cor}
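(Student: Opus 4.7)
The plan is to bootstrap from the $r=0$ case (essentially \cite[Theorem 3.1]{BM}) and then peel off the $r$ missing intersections one at a time via Corollary~\ref{C:1}. Set $X_0=X\cup\{R\cap C_j\mid j=0,\dots,n\}$, so that $W=X_0\setminus\{P_1,\dots,P_r\}$ with $P_k=R\cap C_{i_k}$. By \cite[Theorem 3.1]{BM} one has
\[
\Delta M_{X_0}^{(i,j)}=\begin{cases}1&\text{if } i=0,\ j\le n,\\ 0&\text{if } i=0,\ j\ge n+1,\\ \Delta M_X^{(i-1,j)}&\text{if } i\ge 1,\end{cases}
\]
so in particular $\Delta M_{X_0}$ has size $(a+1,n)$. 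Define $Z_0=X_0$ and $Z_k=Z_{k-1}\setminus\{P_k\}$, so that $Z_r=W$.

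The inductive claim on $k$ is that $\Delta M_{Z_k}$ agrees with $\Delta M_{X_0}$ outside the positions $(q_s,n-s+1)$ for $1\le s\le k$, at each of which the entry is decreased by $1$. To run the step $Z_{k-1}\to Z_k$ one applies Corollary~\ref{C:1} with $P=P_k$. A direct count gives $p=\#(Z_k\cap R)=n-k+1$ (since $X_0$ has $n+1$ points on $R$ and $k$ of them have been removed) and $q=\#(Z_k\cap C_{i_k})=q_k$ (since $P_k$ was the unique point of $X_0\setminus X$ lying on $C_{i_k}$), so the prescribed modification lands exactly at $(q_k,n-k+1)$.

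It remains to verify the hypothesis of Corollary~\ref{C:1} at each stage. For $k=1$ the value $p=n$ equals the second size parameter of $X_0$, so condition~(1) applies; this single step already settles case~(1) of the statement. For $k\ge 2$ we check condition~(3), namely that $\Delta M_{Z_{k-1}}^{(i,j)}=0$ for every $(i,j)\ge(q_k+1,n-k+2)$. None of the previously placed modifications $(q_s,n-s+1)$ with $s<k$ lies in this quadrant, because $q_s\le q_k<q_k+1$; hence $\Delta M_{Z_{k-1}}^{(i,j)}=\Delta M_{X_0}^{(i,j)}=\Delta M_X^{(i-1,j)}$ there. Entries with $j>b$ vanish automatically, and for $n-k+2\le j\le b$ one writes $j=n-s+2$ with $2\le s\le k$; the standing hypothesis of case~(2) applied at index $s$ gives $\Delta M_X^{(i',n-s+2)}=0$ for $i'\ge q_s$, and $i-1\ge q_k\ge q_s$ yields the required vanishing.

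The main obstacle is precisely this simultaneous control through the iteration: the ordering $q_1\le\cdots\le q_r$ is what keeps the earlier modifications out of each successive critical rectangle, while the technical hypothesis of case~(2) supplies the vanishing of $\Delta M_X$ in the columns $n-k+2,\dots,n$ above row $q_k$. Once both are in place, Corollary~\ref{C:1} closes the induction, and reading off $\Delta M_W=\Delta M_{Z_r}$ yields the formula in the statement.
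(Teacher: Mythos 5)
Your proposal is correct and follows essentially the same route as the paper: the paper's proof sets $Y=X\cup (R\cap (C_0\cup\dots\cup C_n))$, computes $\Delta M_Y$ by the complete-intersection-row formula (citing \cite[Lemma 2.15]{GMR}, where you cite \cite[Theorem 3.1]{BM} for the same shift formula), and then removes the points $R\cap C_{i_k}$ one by one via Corollary \ref{C:1}, which is exactly your induction. You merely make explicit the bookkeeping the paper leaves implicit (the order of deletion dictated by $q_1\le\dots\le q_r$, the values $(q,p)=(q_k,n-k+1)$, and the vanishing needed for condition (3), with the degenerate cases $p=b$ or $q=a$ falling under conditions (1)--(2)).
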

\begin{proof}
Let $Y=X\cup (R\cap (C_0\cup \dots \cup C_n))$. Then the statement
follows by \cite[Lemma 2.15]{GMR} and by Corollary \ref{C:1}.
\end{proof}

\section{Technical results}

In this section we prove some technical results that will be useful in
the proof of Theorem \ref{T:4}. In all this section we denote by $M$
an admissible matrix and we keep the notation given previously. 

\begin{Prop}%
\label{P:1}
  Let us suppose that for some $(i_1,j_1)$ and $(i_2,j_2)$ with
      $j_1>j_2$ the
      following conditions hold:
  \begin{enumerate}
  \item $c_{i_1j_1}<0$ and $c_{i_2j_2}\le 0$;
\item $a_{i_1j_1}+r\ge a_{i_2j_2}$, for some $r\in I_{i_1j_1}$.
  \end{enumerate}
Then $i_1\le i_2$.
\end{Prop}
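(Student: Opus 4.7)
The plan is to argue by contradiction: assume $i_1>i_2$ and show that the constraint $r\le-c_{i_1j_1}-1$ combined with the admissibility conditions forces the strict inequality $a_{i_1j_1}+r<a_{i_2j_2}$, contradicting hypothesis (2).

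The key device is the decomposition
\[
a_{i_1j_1}-a_{i_2j_2}=(a_{i_2j_1}-a_{i_2j_2})+\sum_{t=i_2+1}^{i_1}c_{t,j_1},
\]
obtained by splitting $a_{i_1j_1}=\sum_{t=0}^{i_1}c_{t,j_1}$ as $\sum_{t=0}^{i_2}c_{t,j_1}+\sum_{t=i_2+1}^{i_1}c_{t,j_1}$ and using the identity $\sum_{t=0}^{i_2}c_{t,s}=a_{i_2,s}$ from \eqref{eq:7}. The two pieces are then bounded separately.

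For the first piece, since $j_1>j_2$, iterated application of admissibility condition~(3), which says $a_{i,j}\le a_{i,j-1}$, gives $a_{i_2j_1}\le a_{i_2j_2}$, so $a_{i_2j_1}-a_{i_2j_2}\le 0$. For the second piece, I would use admissibility condition~(2): because $c_{i_2j_2}\le 0$, every $(t,s)$ with $t\ge i_2$ and $s\ge j_2$ satisfies $c_{t,s}\le 0$; in particular $c_{t,j_1}\le 0$ for all $i_2+1\le t\le i_1-1$ (these exist because we assumed $i_1>i_2$). Isolating the last term and using $c_{i_1j_1}<0$ yields
\[
\sum_{t=i_2+1}^{i_1}c_{t,j_1}=c_{i_1j_1}+\sum_{t=i_2+1}^{i_1-1}c_{t,j_1}\le c_{i_1j_1}.
\]

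Combining the two bounds gives $a_{i_1j_1}-a_{i_2j_2}\le c_{i_1j_1}$. Since $r\in I_{i_1j_1}$ means $r\le -c_{i_1j_1}-1$, we obtain $a_{i_1j_1}+r\le a_{i_2j_2}+c_{i_1j_1}+r\le a_{i_2j_2}-1<a_{i_2j_2}$, contradicting the assumption $a_{i_1j_1}+r\ge a_{i_2j_2}$. Hence $i_1\le i_2$.

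The only mildly delicate point is ensuring the chain $(t,j_1)\ge(i_2,j_2)$ in order to invoke admissibility~(2) to conclude that the intermediate $c_{t,j_1}$'s are non-positive; this is exactly where both $j_1>j_2$ and the assumption $i_1>i_2$ are used, so the proof naturally consumes the full strength of the hypotheses.
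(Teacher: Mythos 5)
Your argument is correct and is essentially the paper's own proof: both assume $i_1>i_2$, split $a_{i_1j_1}=\sum_{t=0}^{i_2}c_{tj_1}+\sum_{t=i_2+1}^{i_1}c_{tj_1}$, bound the first piece by $a_{i_2j_1}\le a_{i_2j_2}$ via admissibility (3), bound the tail by $c_{i_1j_1}$ via admissibility (2), and contradict $a_{i_1j_1}+r\ge a_{i_2j_2}$ using $r\le -c_{i_1j_1}-1$. The only difference is presentational (you rearrange the inequality before bounding), so nothing further is needed.
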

\begin{proof}
  Let us suppose that $i_1>i_2$. Then by hypothesis we have $\sum_{t=0}^{i_1}c_{tj_1}+r\ge \sum_{t=0}^{i_2}c_{tj_2}$
and so  by Theorem \ref{T0}:
\[
0\ge \sum_{t=0}^{i_2}c_{tj_1}-\sum_{t=0}^{i_2}c_{tj_2}\ge -r-\sum_{t=i_2+1}^{i_1}c_{tj_1}.
\]
This implies that:
\[
0\le r+\sum_{t=i_2+1}^{i_1}c_{tj_1}\le r+c_{i_1j_1}<0,
\]
by hypothesis and by the fact that  by Theorem \ref{T0}
$c_{tj_1}\le 0$ for any $t\ge i_2$.
\end{proof}

In a similar way it is possible to prove the following:
\begin{Prop}  
  Let us suppose that for some $(i_1,j_1)$ and $(i_2,j_2)$ with
  $i_1>i_2$ the
  following conditions hold:
  \begin{enumerate}
  \item $c_{i_1j_1}<0$ and $c_{i_2j_2}\le 0$;
\item $b_{i_1j_1}+r\ge b_{i_2j_2}$, for some $r\in I_{i_1j_1}$.
  \end{enumerate}
Then $j_1\le j_2$.
\end{Prop}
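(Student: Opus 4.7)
The plan is to mirror the argument of Proposition~\ref{P:1} with rows and columns swapped, proceeding by contradiction. So I assume $j_1>j_2$ and aim to derive an impossibility from the admissibility of $M$ (Theorem~\ref{T0}) together with the definition of $I_{i_1 j_1}$.

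First, I unfold condition~(2): $b_{i_1 j_1}+r\ge b_{i_2 j_2}$ reads $\sum_{t=0}^{j_1} c_{i_1 t}+r\ge \sum_{t=0}^{j_2} c_{i_2 t}$. By the column version of admissibility condition~(3), the quantity $b_{ij}=\sum_{t=0}^j c_{it}$ is non-increasing in $i$; since $i_1>i_2$, this gives $\sum_{t=0}^{j_2} c_{i_2 t}\ge \sum_{t=0}^{j_2} c_{i_1 t}$. Substituting and cancelling yields
\[
\sum_{t=j_2+1}^{j_1} c_{i_1 t}+r\ge 0.
\]

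Next, I use admissibility condition~(2): because $c_{i_2 j_2}\le 0$, we have $c_{rs}\le 0$ for every $(r,s)\ge (i_2,j_2)$. Applied to the pairs $(i_1,t)$ with $t\ge j_2$ (which all satisfy $(i_1,t)\ge(i_2,j_2)$, since $i_1>i_2$), this gives $c_{i_1 t}\le 0$ for all such $t$. In particular, dropping all terms other than the one at $t=j_1$ can only increase the partial sum, so $\sum_{t=j_2+1}^{j_1} c_{i_1 t}\le c_{i_1 j_1}$. Combining the two displays,
\[
0\le r+\sum_{t=j_2+1}^{j_1} c_{i_1 t}\le r+c_{i_1 j_1}.
\]

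Finally, I bring in the definition of $I_{i_1 j_1}=\{0,\dots,-c_{i_1 j_1}-1\}$: since $r\in I_{i_1 j_1}$, we have $r\le -c_{i_1 j_1}-1$, hence $r+c_{i_1 j_1}\le -1<0$, contradicting the inequality above. Therefore $j_1\le j_2$, as required. There is no real obstacle here beyond being careful to invoke the correct (column) instance of each admissibility clause; the proof is a direct transposition of the one given for Proposition~\ref{P:1}.
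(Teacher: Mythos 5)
Your proof is correct and is exactly what the paper intends: it states this proposition with the remark that it is proved ``in a similar way'' to Proposition \ref{P:1}, and your argument is precisely that transposed proof, with each use of admissibility (monotonicity of $b_{ij}$ in $i$, the propagation of nonpositivity of $c_{rs}$, and $r\le -c_{i_1j_1}-1$) invoked correctly.
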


Another technical result is:
\begin{Prop}   \label{P:6}
  Let us suppose that for some  $(i_1,j_1)$
  and $(i_2,j_2)$, with $j_2<j_1-1$, the
  following conditions hold:
  \begin{enumerate}
  \item $c_{i_1j_1}<0$ and $c_{i_2j_2}\le 0$;
\item $a_{i_1j_1}+r\ge a_{i_2j_2}$, for some $r\in I_{i_1j_1}$.
  \end{enumerate}
Then there exists $(i,j)$ with $j_2<j<j_1$ and $i\le i_2$ such that
$c_{ij}<0$ and $a_{i_1j_1}+r+c_{ij}+1\le a_{ij}\le a_{i_1j_1}+r$.
\end{Prop}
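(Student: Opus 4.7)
The plan is to exhibit the desired pair explicitly by taking $j = j_1 - 1$ and then locating the right row index via a discrete intermediate-value argument on the sequence $a_{0,\,j_1-1},\, a_{1,\,j_1-1},\,\dots,\,a_{i_2,\,j_1-1}$. First I would invoke Proposition~\ref{P:1} to conclude $i_1 \le i_2$, and record that $i_1 \ge 1$ (otherwise $a_{0,j_1} = c_{0,j_1} < 0$, contradicting $a_{ij} \ge 0$ from admissibility), so that $i_1 - 1 \ge 0$. The choice $j = j_1 - 1$ lies strictly between $j_2$ and $j_1$ thanks to the hypothesis $j_2 < j_1 - 1$.

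Next I would pin down the ``gap'' straddled by the threshold $a_{i_1,j_1} + r$ at $j = j_1 - 1$, using the monotonicity of $a_{ij}$ in $j$ that admissibility condition (3) provides, namely $a_{ij} \le a_{i,j-1}$. At the top of the relevant range,
\[
a_{i_1 - 1,\, j_1 - 1} \;\ge\; a_{i_1 - 1,\, j_1} \;=\; a_{i_1, j_1} - c_{i_1, j_1} \;\ge\; a_{i_1, j_1} + r + 1,
\]
where the last inequality uses $r \le -c_{i_1, j_1} - 1$, i.e.\ $r \in I_{i_1, j_1}$. At the bottom, monotonicity combined with hypothesis (2) gives
\[
a_{i_2,\, j_1 - 1} \;\le\; a_{i_2,\, j_2} \;\le\; a_{i_1, j_1} + r.
\]
Hence, as $i$ runs through $\{i_1 - 1,\, i_1,\,\dots,\, i_2\}$, the value $a_{i, j_1 - 1}$ falls from at least $a_{i_1, j_1} + r + 1$ down to at most $a_{i_1, j_1} + r$.

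I would then let $i$ be the smallest index in $\{i_1,\,\dots,\, i_2\}$ with $a_{i, j_1 - 1} \le a_{i_1, j_1} + r$; such an $i$ exists because $i_2$ is a candidate. By the minimality of $i$ (appealing to the top bound above when $i = i_1$), one has $a_{i-1,\, j_1 - 1} \ge a_{i_1, j_1} + r + 1$, so
\[
c_{i,\, j_1 - 1} \;=\; a_{i,\, j_1 - 1} - a_{i-1,\, j_1 - 1} \;\le\; -1 \;<\; 0,
\]
and adding $c_{i,\, j_1 - 1}$ back to the bound on $a_{i-1,\, j_1 - 1}$ yields $a_{i,\, j_1 - 1} \ge a_{i_1, j_1} + r + c_{i,\, j_1 - 1} + 1$; the upper bound is automatic from the definition of $i$. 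Since $i \le i_2$ and $j_2 < j_1 - 1 < j_1$, the pair $(i,\, j_1 - 1)$ satisfies all requirements. The main obstacle is recognizing the correct $j$: the key point is that the slack $r \le -c_{i_1,j_1} - 1$ is exactly what forces $a_{i_1-1,\, j_1 - 1}$ to exceed $a_{i_1, j_1} + r$ by at least $1$, so the threshold is genuinely straddled at column $j_1 - 1$.
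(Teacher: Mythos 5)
Your argument is correct, but it proves the statement by a genuinely different and more direct route than the paper. The paper first shows, by contradiction, that \emph{some} negative entry $c_{ij}$ exists in the region $j_2<j<j_1$, $i\le i_2$ (if not, admissibility would force $a_{i_2j_2}\ge a_{i_1-1j_1}$, against $a_{i_2j_2}\le a_{i_1j_1}+r<a_{i_1-1j_1}$), and then, inside that column $j$, iterates upward through the successive negative entries, comparing partial sums at each stage, until either the two bounds hold or one reaches the row where the column sum is maximal and gets a contradiction with $a_{i_1-1,j}\ge a_{i_1-1,j_1}$. You instead fix the column $j=j_1-1$ outright and run a first-crossing argument on the integer sequence $a_{i_1-1,j_1-1},\dots,a_{i_2,j_1-1}$: the bound $r\le -c_{i_1j_1}-1$ forces $a_{i_1-1,j_1-1}\ge a_{i_1j_1}+r+1$, hypothesis (2) plus the monotonicity $a_{ij}\le a_{i,j-1}$ forces $a_{i_2,j_1-1}\le a_{i_1j_1}+r$, and the first index where the threshold is crossed automatically has $c_{i,j_1-1}<0$ and satisfies both required inequalities; Proposition \ref{P:1} is still needed (as in the paper) to know $i_1\le i_2$ so the index range makes sense. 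All steps check out, including the integrality step turning strict inequality into the ``$+1$'' bound. Your proof is shorter, avoids the existence-by-contradiction and the iterative descent, and yields slightly more than is claimed: the witness can always be taken in column $j_1-1$ with $i_1\le i\le i_2$, which in particular gives Remark \ref{r:1} by straightforward iteration; the paper's argument gives no such localization but is the template the authors reuse symmetrically (Proposition \ref{P:8}) and in Proposition \ref{P:3}, where the same iteration through consecutive negative entries reappears.
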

\begin{proof}
First note that by Proposition \ref{P:1} it must be $i_1\le i_2$. Suppose
that for every $(i,j)$ with $j_2<j<j_1$ and $i\le i_2$ we have
$c_{ij}\ge 0$. Then this implies that $a_{i_2j_2+1}\ge
a_{i_1-1j_2+1}$, by which we get:
\[
a_{i_2j_2}\ge a_{i_2j_2+1}\ge a_{i_1-1j_2+1}\ge a_{i_1-1j_1}.
\]
However:
\[
a_{i_2j_2}\le a_{i_1j_1}+r=a_{i_1-1j_1}+c_{i_1j_1}+r<a_{i_1-1j_1},
\]
which gives us a contradiction. 

Take $j$ with $j_2<j<j_1$ such that $c_{ij}<0$ for some $i\le
i_2$. Then we can choose $i$ in such a way that $a_{ij}=a_{i_2j}$. Then by Theorem \ref{T0} we see that
$a_{ij}\le a_{i_2j_2}\le a_{i_1j_1}+r$. If $a_{i_1j_1}+r+c_{ij}+1\le a_{ij}$,
then we get the conclusion. So we can suppose that:
\begin{equation}
  \label{eq:3}
  a_{ij}<a_{i_1j_1}+r+c_{ij}+1. 
\end{equation}
Take $i'<i$ such that $c_{i'j}<0$ and $c_{kj}=0$ for
$k=i'+1,\dots,i-1$. Then $a_{ij}=c_{ij}+a_{i'j}$ and \eqref{eq:3} is
equivalent to:
\[
a_{i'j}\le a_{i_1j_1}+r.
\]
Again, if $a_{i_1j_1}+r+c_{i'j}+1\le a_{i'j}$, then the conclusion
follows. Otherwise we proceed as before. Iterating this procedure we
see that either we get the conclusion or $a_{kj}\le a_{i_1j_1}+r$ for
$k$ such that $c_{kj}=1$ and $c_{k+1j}\le 0$. So we can suppose that
such a $k$ exists. Then we see that $a_{kj}=\max\{a_{ij}\mid i\ge
0\}\ge a_{i_1-1j}$, so that $a_{i_1-1j}\le a_{i_1j_1}+r<a_{i_1-1j_1}$. But
by Theorem \ref{T0} this is not possible. 
\end{proof}

In a similar way it is possible to prove the following:
\begin{Prop}  \label{P:8}
  Let us suppose that for some  $(i_1,j_1)$
  and $(i_2,j_2)$, with $i_2<i_1-1$, the
  following conditions hold:
  \begin{enumerate}
  \item $c_{i_1j_1}<0$ and $c_{i_2j_2}\le 0$;
\item $b_{i_1j_1}+r\ge b_{i_2j_2}$, for some $r\in I_{i_1j_1}$.
  \end{enumerate}
Then there exists $(i,j)$ with $i_2<i<i_1$ and $j\le j_2$ such that
$c_{ij}<0$ and $b_{i_1j_1}+r+c_{ij}+1\le b_{ij}\le b_{i_1j_1}+r$.
\end{Prop}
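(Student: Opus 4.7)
The plan is to mirror the proof of Proposition \ref{P:6}, swapping the roles of rows and columns (and of $a$ and $b$) throughout. First, I would apply the column analog of Proposition \ref{P:1} (the proposition stated just after Proposition \ref{P:1}) to $(i_1,j_1)$ and $(i_2,j_2)$: since $i_1>i_2$, $c_{i_1j_1}<0$, $c_{i_2j_2}\le 0$, and $b_{i_1j_1}+r\ge b_{i_2j_2}$, this yields $j_1\le j_2$. This is the counterpart of using Proposition \ref{P:1} to deduce $i_1\le i_2$ in the proof of Proposition \ref{P:6}. Note also that $j_1\ge 1$, since the admissibility condition $b_{i_1,j_1}\ge 0$ forbids $j_1=0$ when $c_{i_1,j_1}<0$.

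Next, I would rule out the possibility that $c_{ij}\ge 0$ for every $(i,j)$ with $i_2<i<i_1$ and $j\le j_2$. Under this assumption, together with $i_2+1<i_1$ (from hypothesis) and $j_1\le j_2$ (from the previous step), one obtains the chain
\begin{equation*}
b_{i_2,j_2}\ge b_{i_2+1,j_2}\ge b_{i_2+1,j_1-1}\ge b_{i_1,j_1-1}=b_{i_1,j_1}-c_{i_1,j_1}>b_{i_1,j_1}+r,
\end{equation*}
where the first and third inequalities use admissibility condition (3) ($b$ non-increasing in $i$), the middle inequality uses $\sum_{t=j_1}^{j_2}c_{i_2+1,t}\ge 0$ (applying the contradiction assumption with $i=i_2+1$ and $j_1\le t\le j_2$), and the final strict inequality uses $r\in I_{i_1j_1}$, so that $r+c_{i_1,j_1}<0$. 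This contradicts hypothesis (2), so there must exist $(i,j)$ with $i_2<i<i_1$, $j\le j_2$, and $c_{ij}<0$.

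Given such an $i$, I would take $j$ maximal with $j\le j_2$ and $c_{ij}<0$, so that $c_{it}=0$ for $j<t\le j_2$ and hence $b_{ij}=b_{i,j_2}$. Admissibility and hypothesis (2) then yield $b_{ij}\le b_{i_2,j_2}\le b_{i_1,j_1}+r$. If $b_{i_1,j_1}+r+c_{ij}+1\le b_{ij}$ we are done; otherwise I would pick the next $j'<j$ with $c_{i,j'}<0$, use $b_{ij}=b_{i,j'}+c_{ij}$ (valid since $c_{ik}=0$ for $j'<k<j$ by the choice of $j'$), deduce $b_{i,j'}\le b_{i_1,j_1}+r$, and restart the argument at $(i,j')$. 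Either the iteration exits with a valid pair, or it reaches the smallest $j^{\ast}$ in row $i$ with $c_{i,j^{\ast}}<0$. In this terminal case, admissibility condition (2) applied in row $i$ together with $c_{it}\ge 0$ for $t<j^{\ast}$ force $c_{i,0}=\cdots=c_{i,k}=1$ and $c_{i,k+1}=\cdots=c_{i,j^{\ast}-1}=0$ for some $k$, so $b_{i,k}=k+1=\max_{t\ge 0}b_{it}$. The failed descent condition at $j^{\ast}$ then yields $k+1\le b_{i_1,j_1}+r$, and combining with $b_{i,j_1-1}\le b_{i,k}$ and $r+c_{i_1,j_1}<0$ gives $b_{i,j_1-1}\le b_{i_1,j_1}+r<b_{i_1,j_1-1}$, contradicting the inequality $b_{i,j_1-1}\ge b_{i_1,j_1-1}$ obtained from $i<i_1$ and admissibility condition (3).

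The main obstacle is purely the bookkeeping of the row-column swap: one must replace $a$ by $b$ consistently, use $b_{ij}\le b_{i-1,j}$ in place of $a_{ij}\le a_{i,j-1}$, and swap the roles of the indices $i$ and $j$ throughout the descent. No new ideas beyond those already present in the proof of Proposition \ref{P:6} are required.
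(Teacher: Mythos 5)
Your proof is correct and is exactly the argument the paper intends: Proposition \ref{P:8} is the row--column transpose of Proposition \ref{P:6}, and your chain $b_{i_2j_2}\ge b_{i_2+1,j_2}\ge b_{i_2+1,j_1-1}\ge b_{i_1,j_1-1}>b_{i_1j_1}+r$ together with the descent along row $i$ through its negative entries mirrors the paper's proof step for step (the minor points you gloss over, such as $c_{ik}=0$ between consecutive negative entries via admissibility condition (2), are treated at the same level of detail in the paper). No discrepancies worth noting.
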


\begin{Rem}  \label{r:1}
  By Proposition \ref{P:6} it follows that, given $(i_1,j_1)$,
  $(i_2,j_2)\in T$, $r_1\in I_{i_1j_1}$ and $r_2\in I_{i_2j_2}$ such that $a_{i_1j_1}+r_1=a_{i_2j_2}+r_2$, for
  any $j$ with $j_1\le j\le j_2$ there exists $i$ such that $(i,j)\in
  T$ and $a_{ij}+r=a_{i_1j_1}+r_1=a_{i_2j_2}+r_2$ for some $r\in I_{ij}$. 

Of course, a similar result follows by Proposition \ref{P:8}.
\end{Rem}

Now we prove a result on $\Delta^R M$.
\begin{Prop}   \label{P:7}
  Let $(i_1,j_1)$, $(i_2,j_1)\in T$ with $i_2<i_1$. Then
  $a_{i_2j_1}+s>a_{i_1j_1}+r$,  for any $r\in I_{i_1j_1}$
  and $s\in I_{i_2j_1}$.
\end{Prop}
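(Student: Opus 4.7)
The plan is to reduce the inequality $a_{i_2 j_1} + s > a_{i_1 j_1} + r$ to a direct estimate on the $c$-entries, exploiting the monotonicity clause of admissibility in a single column. Using the identity $a_{ij} = \sum_{t=0}^i c_{tj}$ from equation \eqref{eq:7}, the claim is equivalent to
\[
s - r \;>\; a_{i_1 j_1} - a_{i_2 j_1} \;=\; \sum_{t=i_2+1}^{i_1} c_{t\, j_1}.
\]
So the whole game is to bound the right-hand side from above and $s-r$ from below, and show the first bound is strictly smaller than the second.

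For the upper bound, since $(i_2, j_1) \in T$ means $c_{i_2 j_1} < 0$, admissibility condition (2) forces $c_{t\, j_1} \le 0$ for every $t \ge i_2$. In particular the intermediate terms $c_{t\, j_1}$ with $i_2 < t < i_1$ are non-positive, so the sum is bounded above by its last term:
\[
\sum_{t=i_2+1}^{i_1} c_{t\, j_1} \;\le\; c_{i_1 j_1}.
\]

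For the lower bound on $s - r$, the definition $I_{i_1 j_1} = \{0,\dots,-c_{i_1 j_1} - 1\}$ gives $r \le -c_{i_1 j_1} - 1$, hence $-r \ge c_{i_1 j_1} + 1$; combined with $s \ge 0$ this yields $s - r \ge c_{i_1 j_1} + 1$. Chaining the two estimates produces
\[
s - r \;\ge\; c_{i_1 j_1} + 1 \;>\; c_{i_1 j_1} \;\ge\; \sum_{t=i_2+1}^{i_1} c_{t\, j_1},
\]
which is the desired strict inequality. No real obstacle is expected here: once one recognizes that the sign control given by admissibility condition (2) applies in the column $j_1$ for every row $t \ge i_2$, the computation is essentially immediate, and the strict inequality comes for free from the $+1$ gained by comparing the endpoint of $I_{i_1 j_1}$ with $c_{i_1 j_1}$.
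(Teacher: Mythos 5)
Your proof is correct and is essentially the paper's argument: both rest on the identity $a_{i_1j_1}-a_{i_2j_1}=\sum_{t=i_2+1}^{i_1}c_{tj_1}$, the nonpositivity of the column entries $c_{tj_1}$ for $t\ge i_2$ granted by admissibility, and the bounds $s\ge 0$, $r\le -c_{i_1j_1}-1$. The only difference is presentational — you argue directly while the paper assumes $a_{i_2j_1}+s\le a_{i_1j_1}+r$ and derives the contradiction $s<0$.
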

\begin{proof}
  Let us suppose that $a_{i_2j_1}+s\le a_{i_1j_1}+r$. Note that
  $a_{i_1j_1}=a_{i_2j_1}+\sum_{i=i_2+1}^{i_1}c_{ij_1}$. Then we have:
\begin{equation} \label{eq:1}
s\le \sum_{i=i_2+1}^{i_1}c_{ij_1}+r.
\end{equation}
However $c_{i_1j_1}+r<0$ and by Theorem \ref{T0} $c_{ij_1}\le
0$ for any $i>i_2$. Then by \eqref{eq:1} we get $s<0$, which gives us
a contradiction.
\end{proof}

In a similar way it is possible to prove the following:
\begin{Prop}  \label{P:10}
  Let $(i_1,j_1)$, $(i_1,j_2)\in T$ with $j_2<j_1$. Then
  $b_{i_1j_2}+s>b_{i_1j_1}+r$,  for any $r\in I_{i_1j_1}$
  and $s\in I_{i_1j_2}$.
\end{Prop}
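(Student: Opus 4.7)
The plan is to mirror the argument of Proposition \ref{P:7}, exchanging the roles of rows and columns. The key point is that the definition of admissibility is symmetric in $i$ and $j$: the second condition (if $c_{ij}\le 0$ then $c_{rs}\le 0$ for any $(r,s)\ge(i,j)$) applies equally in both coordinates, and $b_{ij}=\sum_{t=0}^j c_{it}$ is the column-wise analog of $a_{ij}=\sum_{t=0}^i c_{tj}$.

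Concretely, I would argue by contradiction, assuming $b_{i_1j_2}+s\le b_{i_1j_1}+r$. Using
\[
b_{i_1j_1}=b_{i_1j_2}+\sum_{j=j_2+1}^{j_1}c_{i_1j},
\]
this inequality rewrites as
\[
s\le \sum_{j=j_2+1}^{j_1}c_{i_1j}+r.
\]
Now $r\in I_{i_1j_1}=\{0,\dots,-c_{i_1j_1}-1\}$ forces $c_{i_1j_1}+r<0$, and from $(i_1,j_2)\in T$ we have $c_{i_1j_2}<0$; applying condition (2) of admissibility to Theorem \ref{T0} yields $c_{i_1j}\le 0$ for every $j\ge j_2$, and in particular for $j_2<j<j_1$. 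Consequently
\[
\sum_{j=j_2+1}^{j_1}c_{i_1j}+r\le c_{i_1j_1}+r<0,
\]
which gives $s<0$, contradicting $s\in I_{i_1j_2}$ (whose elements are non-negative).

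There is no real obstacle here; the only thing to double-check is that condition (2) of admissibility lets us conclude $c_{i_1j}\le 0$ for all the intermediate columns $j_2<j<j_1$, which it does because $(i_1,j)\ge(i_1,j_2)$ and $c_{i_1j_2}\le 0$. The proof is essentially a transposition of the argument used for Proposition \ref{P:7}, and will fit in a few lines.
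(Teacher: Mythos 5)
Your proof is correct and is exactly the row--column transposition of the argument for Proposition \ref{P:7}, which is what the paper intends when it states that Proposition \ref{P:10} can be proved ``in a similar way''. The key steps --- rewriting $b_{i_1j_1}=b_{i_1j_2}+\sum_{j=j_2+1}^{j_1}c_{i_1j}$, using $c_{i_1j_1}+r<0$ and admissibility to get $c_{i_1j}\le 0$ for $j>j_2$, and deriving the contradiction $s<0$ --- match the paper's proof of Proposition \ref{P:7} line by line.
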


Given the admissible matrix $M$ of size $(a,b)$, let us
consider $R_0$,\dots,$R_a$ and $C_0$,\dots,$C_b$ pairwise distinct arbitrary $(1,0)$ and
$(0,1)$-lines. Let $P_{ij}=R_i\cap C_j$ and let us consider the
following reduced ACM zero-dimensional
scheme:
\[
X=\{P_{ij}\mid c_{ij}=1\}.
\]
Under this notation we prove the following:
\begin{Prop}   \label{P:3}
Let $p\in \mathbb N$ sucht that:
\[
\{(i,j)\in T\mid p+c_{ij}+1\le a_{ij}\le p\}\ne \emptyset
\]
and let:
\[
k=\max\{j\mid \exists\, (i,j)\in T,\,  p+c_{ij}+1\le a_{ij}\le
p\}. 
\]
Then $0\le p\le a$ and $\#(X\cap R_{p})=k+1$.
\end{Prop}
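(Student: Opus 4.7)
The plan is to exploit the Young-diagram structure of the locus $\{(i,j)\mid c_{ij}=1\}$, which is exactly the support of $X$. Admissibility condition (2) states that once $c_{i'j'}\le 0$, every $c_{ij}$ with $(i,j)\ge (i',j')$ is also $\le 0$; taking contrapositives and using $c_{ij}\le 1$, one sees that $c_{ij}=1$ forces $c_{i'j'}=1$ for every $(i',j')\le (i,j)$, so the set of $1$'s is a Young diagram. Consequently $\#(X\cap R_p)$ is the length of row $p$ of this diagram, and the conclusion is equivalent to proving $c_{pj}=1$ for $0\le j\le k$ together with $c_{p,k+1}\le 0$, plus the bounds $0\le p\le a$.

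I would start by fixing $(i_0,k)\in T$ realising the maximum in the definition of $k$, so that $a_{i_0,k}\le p<a_{i_0-1,k}$. Since $a_{i_0,k}\ge 0$, this already forces $p\ge 0$. Because $c_{i_0,k}<0$, column $k$ has its $1$'s confined to an initial segment of rows $0,\dots,\nu-1$ with $\nu\le i_0$; hence $a_{i_0-1,k}\le \nu$, so $\nu>p$ and $c_{tk}=1$ for $t=0,\dots,p$. The Young-diagram property then propagates this to $c_{pj}=1$ for every $0\le j\le k$, giving $\#(X\cap R_p)\ge k+1$; in particular $c_{p0}=1\ne 0$, so the size condition $(a,b)$ of $M$ forces $p\le a$.

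The remaining step is $c_{p,k+1}\le 0$, which I would prove by contradiction using the maximality of $k$. If $c_{p,k+1}=1$, then by the Young property $c_{t,k+1}=1$ for all $0\le t\le p$, giving $a_{p,k+1}=p+1$. Admissibility condition (3) yields $a_{i_0,k+1}\le a_{i_0,k}\le p$, so the integer sequence $a_{t,k+1}$ drops from $p+1$ at $t=p$ to at most $p$ at $t=i_0$; there is therefore an index $t^\ast\in\{p+1,\dots,i_0\}$ with $a_{t^\ast-1,k+1}>p\ge a_{t^\ast,k+1}$. At this $t^\ast$, $c_{t^\ast,k+1}=a_{t^\ast,k+1}-a_{t^\ast-1,k+1}<0$, so $(t^\ast,k+1)\in T$; rearranging yields $p+c_{t^\ast,k+1}+1\le a_{t^\ast,k+1}\le p$, contradicting the maximality of $k$ since $k+1>k$. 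The main obstacle is locating this crossing index $t^\ast$ cleanly on column $k+1$ and extracting the right inequality to contradict maximality; the rest is routine bookkeeping with the three admissibility axioms and the Young-diagram consequence of (2).
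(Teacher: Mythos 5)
Your proof is correct and follows essentially the same route as the paper's: reduce the claim to $c_{pk}=1$ and $c_{p,k+1}\le 0$, obtain the first from the sign structure of column $k$ forced by admissibility, and obtain the second by contradiction from the maximality of $k$ applied to a negative entry of column $k+1$. The only real difference is cosmetic: where the paper iterates up column $k+1$, invoking maximality at each negative entry until it reaches the top block of $1$'s, you locate the first index $t^\ast$ where $a_{t,k+1}$ drops to $\le p$ and apply maximality once, a slightly cleaner bookkeeping of the same idea.
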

\begin{proof}
Let $(h,k)\in T$ such that $p+c_{hk}+1\le a_{hk}\le p$. Then there exists $s\in I_{hk}$ such that
$a_{hk}+s=p$. This implies that $0\le p\le a$.

Now we prove that $\#(X\cap R_{p})=k+1$. We will show that $c_{pk}=1$
and $c_{pk+1}\le 0$. Let us first note that:
\[
p=a_{hk}+s=a_{h-1k}+c_{hk}+s\le h-1<h.
\]
Let us suppose now that $c_{pk}\le 0$. In this case by \eqref{eq:7} we see that:
\[
a_{hk}+s=a_{h-1k}+c_{hk}+s<a_{hk-1}\le a_{p-1k}\le p,
\]
which contradicts the fact that $a_{hk}+s=p$. So we can say that $c_{pk}=1$.

Let us suppose now that $c_{pk+1}=1$. Then by \eqref{eq:7} we get:
\[
a_{pk+1}=p+1=a_{hk}+s+1.
\]
By Theorem \ref{T0} we see that $a_{hk}\ge a_{hk+1}$ and we
also have $a_{hk}<a_{hk}+s+1=a_{pk+1}$. This implies that $a_{hk+1}< a_{pk+1}$, but $p<h$ and so there exists $i$ with $p<i\le h$ such that
$c_{ik+1}<0$. Let $i\le h$ such that $c_{ik+1}<0$ and
$a_{ik+1}=a_{hk+1}\le a_{hk}$. By hypothesis on $k$ it must be
$a_{ik+1}<p+c_{ik+1}+1$. So, taken $i'$ such that
$c_{i'k+1}<0$ and 
$c_{i'+1k+1}=\dots=c_{h-1k+1}=0$, we see that
$a_{i'k+1}\le p$. Again, by hypothesis it must be
$a_{i'k+1}<p+c_{i'k+1}+1$. Iterating the procedure we see that,
taken $m$ such that $c_{mk+1}=1$ and $c_{m+1k+1}\le 0$, it must be
$a_{mk+1}\le p$, where by \eqref{eq:7} $a_{mk+1}=m+1$. However,
$c_{pk+1}=1$ and so $m\ge p$ and so
this gives us a contradiction.  
\end{proof}

In a similar way it is possible to prove the following:
\begin{Prop}   \label{P:9}
Let $q\in \mathbb N$ such that:
\[
\{(i,j)\in T\mid q+c_{ij}+1\le a_{ij}\le q\}\ne \emptyset
\]
and let:
\[
h=\max\{i\mid \exists\, (i,j)\in T,\, q+c_{ij}+1\le b_{ij}\le
  q\}. 
\]
Then $0\le q\le b$ and $\#(X\cap C_{q})=h+1$.
\end{Prop}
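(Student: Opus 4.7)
The plan is to mirror the proof of Proposition~\ref{P:3} with the roles of the two coordinates exchanged throughout: rows become columns, $(1,0)$-lines become $(0,1)$-lines, and $a_{ij}$ is replaced by $b_{ij}$. (The statement as printed mixes $a_{ij}$ and $b_{ij}$, which appears to be a typo: by direct analogy with Proposition~\ref{P:3}, the first displayed set should also involve $b_{ij}$.) I proceed under this reading.

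I would start by picking $(h,k)\in T$ with $q+c_{hk}+1\le b_{hk}\le q$ and choosing $s\in I_{hk}$ with $b_{hk}+s=q$. The computation $q=b_{hk-1}+c_{hk}+s<b_{hk-1}\le k$, using $c_{hk}+s<0$ and $b_{hk-1}=\sum_{t=0}^{k-1}c_{ht}\le k$, shows $q<k$; since $k\le b$ (because $(h,k)\in T$ and $\Delta M$ has size $(a,b)$), this gives $0\le q\le b$.

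The heart of the argument is to prove $c_{hq}=1$ and $c_{h+1\,q}\le 0$; together with admissibility conditions (1) and (2) these force $c_{iq}=1$ for $i\le h$ and $c_{iq}\le 0$ for $i\ge h+1$, whence $\#(X\cap C_q)=h+1$. The equality $c_{hq}=1$ is straightforward: if $c_{hq}\le 0$, then condition (2) yields $c_{ht}\le 0$ for all $t\ge q$, so $b_{hk-1}\le b_{hq-1}\le q$, contradicting $q=b_{hk-1}+c_{hk}+s<b_{hk-1}$.

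The main obstacle is ruling out $c_{h+1\,q}=1$. Supposing otherwise for contradiction, admissibility forces $c_{h+1\,t}=1$ for all $t\le q$, hence $b_{h+1\,q}=q+1$; condition (3) gives $b_{h+1\,k}\le b_{hk}=q-s\le q<b_{h+1\,q}$. Since $q<k$, this strict drop produces some $j$ with $q<j\le k$ and $c_{h+1\,j}<0$; I would take such $j$ with $b_{h+1\,j}=b_{h+1\,k}$, so that $b_{h+1\,j}\le q$. The maximality of $h$ in the definition then rules out the bracket $q+c_{h+1\,j}+1\le b_{h+1\,j}\le q$, forcing $b_{h+1\,j}<q+c_{h+1\,j}+1$. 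Iterating this through successively smaller indices $j'$ with $c_{h+1\,j'}<0$ and zero entries between consecutive negatives, one reaches the largest $m$ for which $c_{h+1\,m}=1$, satisfying $b_{h+1\,m}=m+1\le q$; since $c_{h+1\,q}=1$ forces $m\ge q$, this is the desired contradiction.
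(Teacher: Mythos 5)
Your proof is correct and is essentially the paper's intended argument: the paper proves Proposition \ref{P:9} only by the remark that it follows ``in a similar way'' from Proposition \ref{P:3}, and your write-up is exactly that mirrored proof (including correctly reading the $a_{ij}$ in the hypothesis set as a typo for $b_{ij}$, consistent with how \eqref{eq:8} uses the proposition). No gaps.
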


\section{Main Theorem}   \label{main}

In this section we give some conditions for an admissible
matrix to be the Hilbert matrix of some reduced zero-dimensional schemes. If $M$
is an admissible matrix of size $(a,b)$, it is always possible to
associate to $M$ a reduced zero-dimensional scheme $Z$ in the following way. Let $R_0$,\dots,$R_a$ and $C_0$,\dots,$C_b$ be pairwise distinct arbitrary $(1,0)$ and
$(0,1)$-lines. Let $P_{ij}=R_i\cap C_j$ and let us consider the scheme:
\[
X=\{P_{ij}\mid c_{ij}=1\}.
\]
By proceeding as in \cite[Proposition 4.1]{BM} we see that $X$ is an ACM
zero-dimensional scheme and
that:
\begin{equation}
  \label{eq:11}
  \Delta M_X^{(i,j)}=
  \begin{cases}
  1 & \text{if }(i,j)\in X\\
  0 & \text{if }(i,j)\notin X.  
  \end{cases}
\end{equation}
Note that $(a_{ij}+r,b_{ij}+r)\in X$ for any $(i,j)\in T$ and $r\in
I_{ij}$ (see \eqref{eq:6} and \eqref{eq:9}). Then it is easy to see that:
\[
\mathcal P=\{P_{a_{ij}+r,b_{ij}+r}\mid (i,j)\in T,\, r\in I_{ij}\}\subsetneq X.
\]

\begin{Def}  \label{d}
  The scheme $Z=X\setminus \mathcal P$ is called \emph{zero-dimensional
  scheme associated to $M$}.
\end{Def}

We call $Z$ the We want to show under which conditions
the Hilbert matrix of $Z$ is $M$. For this purpose we give the
following definitions:
\begin{Def}
  Let $M$ be an admissible matrix. We say that $M$ is a
  \emph{$\Delta$-regular matrix} if for any $(i_1,j_1)$, 
  \dots, $(i_n,j_n)\in T$ and $r_1\in I_{i_1j_1}$, 
  \dots, $r_n\in I_{i_nj_n}$ the following conditions hold:
\begin{enumerate}
  \item  if $a_{i_1j_1}+r_1=\dots=a_{i_nj_n}+r_n$, $i_1\ne
    \dots \ne i_n$ and $j_1<\dots <j_n$, then
   $b_{i_1j_1}+r_1\le \dots \le b_{i_nj_n}+r_n$;
\item  if $b_{i_1j_1}+r_1=\dots=b_{i_nj_n}+r_n$, $j_1\ne \dots
  \ne j_n$ and $i_1<\dots <i_n$, then
    $a_{i_1j_1}+r_1\le \dots \le a_{i_nj_n}+r_n$.
  \end{enumerate}
\end{Def}

\begin{Rem}
Given an admissible matrix $M$ and any  $(i_1,j_1)$, 
  \dots, $(i_n,j_n)\in T$ and $r_1\in I_{i_1j_1}$, 
  \dots, $r_n\in I_{i_nj_n}$ such that $a_{i_1j_1}+r_1=\dots=a_{i_nj_n}+r_n$, $i_1=
    \dots =i_n$ and $j_1<\dots <j_n$, then by Proposition \ref{P:10}
    it must be
   $b_{i_1j_1}+r_1>\dots >b_{i_nj_n}+r_n$.

  Similarly, if $b_{i_1j_1}+r_1=\dots=b_{i_nj_n}+r_n$, $j_1=\dots
   =j_n$ and $i_1<\dots <i_n$, then by Proposition \ref{P:7}
    $a_{i_1j_1}+r_1> \dots >a_{i_nj_n}+r_n$.
\end{Rem}

\begin{Def}
  An admissible matrix $M$ is called \emph{plain
    matrix} if for any $(i_1,j_1)$, $(i_2,j_2)\in T$, $r_1\in
  I_{i_1j_1}$, $r_2\in I_{i_2j_2}$ we have $(a_{i_1j_1}+r_1,b_{i_1j_1}+r_1)\ne (a_{i_2j_2}+r_2,b_{i_2j_2}+r_2)$.
\end{Def}

Note that, if $M$ is plain, then for
any $(i_1,j_1),(i_2,j_2)\in T$, $r_1\in I_{i_1j_1}$ and $r_2\in
I_{i_2j_2}$ we have $P_{a_{i_1j_1}+r_1,b_{i_1j_1}+r_1}\ne
P_{a_{i_2j_2}+r_2,b_{i_2j_2}+r_2}$. 

\begin{Ex}  \label{Ex:2}
Let us consider the following admissible matrix $M$ and its first difference $\Delta M$.
\begin{figure}[H]
\begin{preview}
\begin{center}
\subfloat[$M$]{
\begin{tikzpicture}[x=0.45cm,y=0.45cm,font=\tiny]
\clip(0,0.5) rectangle (11,8);
  \draw[style=help lines,xstep=1,ystep=1] (1,1) grid (10,7);
\foreach \x in {1,...,9} \draw (\x,1) +(.5,.5)  node {\dots};
\foreach \y in {2,...,6} \draw (9,\y) +(.5,.5) node {\dots};
\draw (1.5,2.5) node{$4$};
\draw (2.5,2.5) node{$8$};
\foreach \x in {3,...,8} \draw (\x,2.5) +(.5,0) node {$12$};
\draw (1.5,3.5) node{$4$};
\draw (2.5,3.5) node{$8$};
\foreach \x in {3,...,8} \draw (\x,3.5) +(.5,0) node {$12$};
\draw (1.5,4.5) node{$3$};
\draw (2.5,4.5) node{$6$};
\draw (3.5,4.5) node{$9$};
\foreach \x in {4,...,8} \draw (\x,4.5) +(.5,0) node {$12$};
\draw (1.5,5.5) node{$2$};
\draw (2.5,5.5) node{$4$};
\draw (3.5,5.5) node{$6$};
\draw (4.5,5.5) node{$8$};
\draw (5.5,5.5) node{$10$};
\foreach \x in {6,...,8} \draw (\x,5.5) +(.5,0) node {$11$};
\foreach \x in {1,...,7} \draw (\x,6.5) +(.5,0) node {$\x$};
\draw (8.5,6.5) node {$7$};
\foreach \x in {0,...,8} \draw (\x,7.5) +(1.5,0) node {$\x$};
\foreach \y in {0,...,5} \draw (0.5,6.5-\y) node {$\y$};
\end{tikzpicture}
}
\hspace{1cm}
\subfloat[$\Delta M$]{
\begin{tikzpicture}[x=0.45cm,y=0.45cm,font=\tiny]
\clip(0,0.5) rectangle (11,8);
  \draw[style=help lines,xstep=1,ystep=1] (1,1) grid (10,7);
\foreach \x in {1,...,9} \draw (\x,1) +(.5,.5)  node {\dots};
\foreach \y in {2,...,6} \draw (9,\y) +(.5,.5) node {\dots};
\foreach \x in {1,...,8} \draw (\x,2) +(.5,.5)  node {$0$};
\foreach \y in {2,...,6} \draw (8,\y) +(.5,.5) node {$0$};
\foreach \x in {1,2,3} \draw (\x,3.5) +(.5,0) node{$1$};
\draw (4.5,3.5) node {$-3$};
\foreach \x in {5,...,7} \draw (\x,3.5) +(.5,0) node{$0$};
\foreach \x in {1,2,3,4} \draw (\x,4.5) +(.5,0) node{$1$};
\draw (5.5,4.5) node {$-2$};
\draw (6.6,4.5) node {$-1$};
\draw (7.5,4.5) node {$0$};
\foreach \x in {1,...,5} \draw (\x,5.5) +(.5,0) node{$1$};
\draw (6.5,5.5) node {$0$};
\draw (7.5,5.5) node {$-1$};
\foreach \x in {1,...,7} \draw (\x,6.5) +(.5,0) node{$1$};
\foreach \x in {0,...,8} \draw (\x,7.5) +(1.5,0) node {$\x$};
\foreach \y in {0,...,5} \draw (0.5,6.5-\y) node {$\y$};
\end{tikzpicture}
}
\end{center}
\end{preview}
\end{figure}
It is possible to see that $M$ is $\Delta$-regular and plain. Indeed,
note that $T=\{(1,6),(2,5),(2,4),(3,3)\}$ and that:
\begin{itemize}
\item $c_{16}=-1$, so that $r=0$ and $(a_{16},b_{16})=(0,4)$;
\item $c_{25}=-1$, so that $r=0$ and $(a_{25},b_{25})=(0,1)$;
\item $c_{24}=-2$, so that $r=0,1$, $(a_{24},b_{24})=(0,2)$ and
  $(a_{24}+1,b_{24}+1)=(1,3)$;
\item $c_{33}=-3$, so that $r=0,1,2$, $(a_{33},b_{33})=(0,0)$ and
  $(a_{33}+1,b_{33}+1)=(1,1)$ and $(a_{33}+2,b_{33}+2)=(2,2)$.
\end{itemize}
Since all these pairs are distinct, $M$ is plain. It is
$\Delta$-regular because:
\begin{itemize}
\item taken $(3,3),(2,5),(1,6)\in T$, we get $a_{33}=a_{25}=a_{16}=0$
  and $b_{33}=0<b_{25}=1<b_{16}=4$;
\item taken $(3,3),(2,4),(1,6)\in T$, we get $a_{33}=a_{24}=a_{16}=0$
  and $b_{33}=0<b_{24}=2<b_{16}=4$;
\item taken $(3,3),(2,4)\in T$, we get $a_{33}+1=a_{24}+1=1$ and
  $b_{33}+1=1<b_{24}+1=3$;
\item taken  $(2,5),(3,3)\in T$, we get $b_{25}=b_{33}+1=1$ and
  $a_{25}=0<a_{33}+1=1$;
\item taken $(2,4),(3,3)\in T$, we get $b_{24}=b_{33}+2=2$ and
  $a_{24}=0<a_{33}+2=2$.
\end{itemize}
\end{Ex}

Recalling Definition \ref{d}, we prove the following:

\begin{Thm}   \label{T:4}
  Let $M$ be a plain and $\Delta$-regular matrix such that one of the following conditions holds:
\begin{enumerate}
\item $a_{ij}\ge a_{i-1j+1}$ for any $i,j\ge 0$; 
\item $b_{ij}\ge b_{i+1j-1}$ for any $i,j\ge 0$.
\end{enumerate}
Then $M^{(i,j)}=M_Z^{(i,j)}$ for any $(i,j)$.
\end{Thm}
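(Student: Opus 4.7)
The strategy is to remove the points of $\mathcal{P}$ from the ACM scheme $X$ one at a time, applying Corollary \ref{C:1} at each step to track the evolution of the Hilbert matrix. Since $M_X$ is determined by \eqref{eq:11} and the target is $M$, the total net effect of the $|\mathcal{P}|=\sum_{(i,j)\in T}(-c_{ij})$ removals must be to decrement $\Delta M_X$ by $1$ exactly $-c_{ij}$ times at each $(i,j)\in T$, and leave it unchanged elsewhere. By the symmetry between conditions (1) and (2) of the statement, I would assume condition (1) throughout.

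The first step is to choose a linear ordering of the pairs $\{((i,j),r)\mid (i,j)\in T,\ r\in I_{ij}\}$ that makes the induction go through. The $\Delta$-regularity of $M$ controls which of the corresponding points $P_{a_{ij}+r,b_{ij}+r}$ share a common row or column, and plainness guarantees that all points of $\mathcal{P}$ are distinct. The ordering should be chosen so that condition (1) of the theorem, together with Propositions \ref{P:7} and \ref{P:10}, ensures that the ``tail of zeros'' hypothesis (3) of Corollary \ref{C:1} holds at every stage; a reasonable guess is to enumerate $T$ in a diagonal order (for instance by decreasing $j$, equivalently increasing $i$) and, within each $(i,j)\in T$, to process $r$ in increasing order.

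Having fixed the ordering $\mathcal{P}=\{Q_1,\dots,Q_N\}$, set $Z_0=X$ and $Z_{m+1}=Z_m\setminus\{Q_{m+1}\}$, and proceed by induction on $m$. The inductive hypothesis records the cumulative shape of $\Delta M_{Z_m}$: it agrees with $\Delta M_X$ except for an accumulated $-1$ at each position $(i,j)\in T$ for each $r\in I_{ij}$ already processed. The inductive step splits into two sub-claims: (a) the pair $(Z_m,Q_{m+1})$ satisfies the hypothesis of Corollary \ref{C:1}; and (b) the resulting decrement in $\Delta M_{Z_{m+1}}$ occurs at precisely the position $(i,j)\in T$ to which $Q_{m+1}$ is associated. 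For (a), the inductive description of $\Delta M_{Z_m}$ combined with condition (1) and Proposition \ref{P:7} furnishes the needed vanishing beyond $(q+1,p+1)$. For (b), one computes $p=\#(Z_{m+1}\cap R_{a_{ij}+r})$ and $q=\#(Z_{m+1}\cap C_{b_{ij}+r})$: in the starting scheme $X$ these counts are given by Propositions \ref{P:3} and \ref{P:9}, and a bookkeeping argument using Propositions \ref{P:1}, \ref{P:6} and \ref{P:8} tracks how the counts drop as earlier points of $\mathcal{P}$ sharing the same row or column have been removed.

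The main obstacle is sub-claim (b): ensuring the $-c_{ij}$ decrements associated with a single pair $(i,j)\in T$ all land on the same entry $(i,j)$ of the $\Delta$-matrix, rather than drifting to a neighboring position as the row/column counts change. The delicate case is when several triples $((i,j),r)$ share a row or column, which is governed precisely by the $\Delta$-regularity hypothesis in combination with the monotonicity $a_{ij}\ge a_{i-1,j+1}$. Once (a) and (b) are established for all $m$, summing the inductive effects yields $\Delta M_{Z_N}=\Delta M$, and hence $M_Z=M$.
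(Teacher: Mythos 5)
Your overall strategy (delete the points of $\mathcal P$ from the ACM scheme $X$ one at a time and track $\Delta M$ via Corollary \ref{C:1}) is the same as the paper's, but your sub-claim (b) is not just the ``main obstacle'' --- as stated it is false, and the paper's proof hinges on the way it fails. When the point $P_{a_{ij}+r,\,b_{ij}+r}$ is removed, the decrement in $\Delta M$ lands at $(q,p)=(i,\,p_{ij}^{(r)})$, where $p_{ij}^{(r)}=m_{ij}^{(r)}+n_{ij}^{(r)}-j$ is computed from the other entries of $T$ in row $i$ sharing the same value of $a+r$; in general $p_{ij}^{(r)}\ne j$. Concretely, in Example \ref{Ex:2} the entries $(2,5)$ and $(2,4)$ of $T$ both have $a$-value $0$ for $r=0$, and the decrements attached to them land in columns $4$ and $5$ respectively, i.e.\ swapped relative to your invariant. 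So the inductive hypothesis you propose (``$\Delta M_{Z_m}$ agrees with $\Delta M_X$ except for a $-1$ at each $(i,j)\in T$ already processed'') is violated at intermediate stages, and no amount of $\Delta$-regularity plus the monotonicity $a_{ij}\ge a_{i-1j+1}$ will force each decrement onto its own $(i,j)$. The correct resolution, which is the heart of the paper's argument, is to compute $(q,p)$ exactly (Propositions \ref{P:3}, \ref{P:9}, Remark \ref{r:1}, Proposition \ref{P:10}, giving \eqref{eq:8} and \eqref{eq:16}) and then observe that $j\mapsto p_{ij}^{(r)}$ is an involution on the columns of $T$-entries in row $i$ with a fixed value of $a+r$, so that the \emph{multiset} of decremented positions coincides with the required one even though individual decrements drift.

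Two further points are left unsupported in your sketch. First, the deletion order cannot be an unexamined ``reasonable guess'': the paper deletes points in decreasing lexicographic order of their coordinates $(a_{ij}+r,\,b_{ij}+r)$, and uses this order, together with Proposition \ref{P:1}, to verify at every stage that hypothesis (3) of Corollary \ref{C:1} holds (no negative entry of the current $\Delta$-matrix lies weakly north-east of $(q+1,p+1)$); your diagonal-in-$T$ order would need its own verification of this, which you do not supply. Second, the opening step of the paper under condition (2) (that points in the same matrix row get strictly decreasing $b$-coordinates, so no two deleted points share a grid row in the wrong order) is exactly where the hypothesis $b_{ij}\ge b_{i+1j-1}$ enters; in your write-up the role of condition (1)/(2) is only gestured at. As it stands the proposal reproduces the paper's framework but is missing its decisive computation, so it does not constitute a proof.
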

\begin{proof}
Let us suppose that $b_{ij}\ge b_{i+1j-1}$ for any $i,j\ge 0$. Under
this hypothesis we have that for any $(i_1,j),(i_2,j)\in T$ with
$i_1>i_2$ and for any $r_1\in I_{i_1j}$ and $r_2\in I_{i_2j}$ it is
$b_{i_2j}+r_2>b_{i_1j}+r_1$. Indeed, it is sufficient to show that
$b_{i_2j}>b_{i_1j}-c_{i_1j}-1=b_{i_1j-1}-1$. By hypothesis and by the
fact that $M$ is admissible we have:
\[
b_{i_2j}\ge b_{i_2+1j-1}\ge b_{i_1j-1}>b_{i_1j-1}-1.
\]

Now we will prove that $\Delta M_Z^{(i,j)}=\Delta M^{(i,j)}$ for any
  $(i,j)$. We apply Corollary \ref{C:1} by deleting one by one the points
  $(a_{ij}+r,b_{ij}+r)$, that are all distinct since $M$ is
  plain. We proceed in the following way: given
  $(a_{i_1j_1}+r_1,b_{i_1j_1}+r_1)$ and
  $(a_{i_2j_2}+r_2,b_{i_2j_2}+r_2)$, we delete first
  $(a_{i_2j_2}+r_2,b_{i_2j_2}+r_2)$ if either
  $a_{i_1j_1}+r_1<a_{i_2j_2}+r_2$ or $a_{i_1j_1}+r_1=a_{i_2j_2}+r_2$
  and $b_{i_1j_1}+r_1<b_{i_2j_2}+r_2$. 

Let us first show that it is possible to compute $M_Z$ by applying
recursively Corollary \ref{C:1}. Given the point $(a_{ij}+r,b_{ij}+r)$,
with $c_{ij}<0$ and $r\in I_{ij}$, by what we have just proved and  by
the fact that $M$ is $\Delta$-regular we see that:
\[
\{(h,j)\mid b_{hj}+s=b_{ij}+r,\, h>i,\, s\in I_{hj}\}=\emptyset
\]
and
\begin{equation}
  \label{eq:2}
  \min\{h\mid (h,k)\in T,\,
b_{hk}+s=b_{ij}+r,\, a_{hk}+s\ge
a_{ij}+r,\, s\in I_{hk}\}=i.
\end{equation}
So, keeping the notation of Corollary \ref{C:1} and \eqref{eq:2} together with Remark
\ref{r:1} and  Proposition \ref{P:9} imply:
\begin{equation} 
\label{eq:8}
  q=\#(X\cap C_{b_{ij}+r})-\#\{(h,k)\in T\mid
b_{hk}+s=b_{ij}+r,\, a_{hk}+s\ge a_{ij}+r,\, s\in I_{hk}\}=i.
\end{equation}
Let:
\begin{align*}
m_{ij}^{(r)}=&\min\{k\mid \exists\, (i,k)\in T,\, k\le j,\,
a_{ik}+s=a_{ij}+r,\,s\in I_{ik}\},\\
n_{ij}^{(r)}=&\max\{k\mid \exists\, (i,k)\in T,\, k\ge j\, ,
a_{ik}+s=a_{ij}+r,\, s\in I_{ik}\},\\
p_{ij}^{(r)}=&\, m_{ij}^{(r)}
+\#\{(i,k)\mid a_{ik}+s=a_{ij}+r,\, k>j,\, s\in I_{ik}\}.
\end{align*}
Note that by Remark \ref{r:1} and by Proposition \ref{P:10}:
\begin{equation}  \label{eq:10}
p_{ij}^{(r)}=m_{ij}^{(r)}+n_{ij}^{(r)}-j.
\end{equation}
By by
the fact that $M$ is $\Delta$-regular, by Remark \ref{r:1} and by Propositions \ref{P:10} and \ref{P:3}:
\[
\#(X\cap R_{a_{ij}+r})
-\#\{(h,k)\in T\mid
a_{hk}+s=a_{ij}+r,\, b_{hk}+s\ge b_{ij}+r,\, s\in I_{hk}\}=p_{ij}^{(r)}
\]
that, in the notation of Corollary \ref{C:1} and together with
\eqref{eq:8}, gives that: 
\begin{equation}
  \label{eq:16}
  (q,p)=(i,p_{ij}^{(r)}).
\end{equation}

Suppose that the first point to be deleted is $(a_{i_1j_1}+r_1,b_{i_1j_1}+r_1)$ and let $X'=X\setminus
\{(a_{i_1j_1}+r_1,b_{i_1j_1}+r_1)\}$. Then by the fact that $X$ is ACM
we can apply Corollary \ref{C:2}:
\[
\Delta M_{X'}^{(i,j)}=
\begin{cases}
  \Delta M_X^{(i,j)} & \text{for } (i,j)\ne (i_1,p_{i_1j_1}^{(r_1)})\\
\Delta M_X^{(i,j)}-1 & \text{for } (i,j)=(i_1,p_{i_1j_1}^{(r_1)}).
\end{cases}
\]

Iterating the procedure, taken a point
$(a_{i_1j_1}+r_1,b_{i_1j_1}+r_1)$, let us consider:
\begin{multline*}
G=\{(a_{ij}+r,b_{ij}+r)\in \mathcal P\mid
a_{ij}+r>a_{i_1j_1}+r_1\}\cup\\ 
\cup \{(a_{ij}+r,b_{ij}+r)\in \mathcal P\mid
a_{ij}+r=a_{i_1j_1}+r_1,\, b_{ij}+r>b_{i_1j_1}+r_1\}
\end{multline*}
and the correspondent set:
\[
H=\{(i,p_{ij}^{(r)})\mid (a_{ij}+r,b_{ij}+r)\in G\}.
\]
If $X''=X\setminus G$, suppose that we can apply Corollary \ref{C:1}
to the scheme $X''$ by deleting one by one all the points
$(a_{ij}+r,b_{ij}+r)\in G$. In this way we see that:
\begin{equation}
  \label{eq:17}
  \Delta M_{X''}^{(i,j)}=
\begin{cases}
  \Delta M_X^{(i,j)}& \text{if }(i,j)\notin H\\
  \Delta M_X^{(i,j)}-1& \text{if }(i,j)\in H.
\end{cases}
\end{equation}
We will show that we can apply
  Corollary \ref{C:1} to scheme $X'''=X''\setminus
  \{(a_{i_1j_1}+r_1,b_{i_1j_1}+r_1)\}$.

By \eqref{eq:11}, \eqref{eq:16} and by \eqref{eq:17} we know $\Delta M_{X''}^{(i,j)}<0$ if and only if
$(i,j)\in H$. By \eqref{eq:16} we cannot apply
  Corollary \ref{C:1} to $X'''$ if $(i_1+1,p_{i_1j_1}^{(r_1)}+1)\le
  (i_2,p_{i_2j_2}^{(r_2)})$ for some $(i_2,p_{i_2j_2}^{(r_2)})\in
  H$. Since $m_{ij}^{(r)}\le p_{ij}^{(r)}\le n_{ij}^{(r)}$ for every
  $(i,j)\in T$, by Remark \ref{r:1} we have that
  $(i_1,p_{i_1j_1}^{(r_1)}),(i_2,p_{i_2j_2}^{(r_2)})\in T$ and that:
\[
a_{i_1p_{i_1j_1}^{(r_1)}}+s_1=a_{i_1j_1}+r_1\text{\quad and \quad} a_{i_2p_{i_2j_2}^{(r_2)}}+s_2=a_{i_2j_2}+r_2,
\]
for some $s_1\in I_{i_1p_{i_1j_1}^{(r_1)}}$ and $s_2\in I_{i_2p_{i_2j_2}^{(r_2)}}$. 
This means that:
\[
a_{i_1p_{i_1j_1}^{(r_1)}}+s_1=a_{i_1j_1}+r_1\le a_{i_2j_2}+r_2=a_{i_2p_{i_2j_2}^{(r_2)}}+s_2
\]
where $i_1<i_2$ and $p_{i_1j_1}^{(r_1)}<p_{i_2j_2}^{(r_2)}$, which
contradicts Proposition \ref{P:1}. So we can apply Corollary \ref{C:1} and we
  see that:
\[
\Delta M_{X'''}^{(i,j)}=
\begin{cases}
  \Delta M_{X''}^{(i,j)} & \text{for } (i,j)\ne (i_1,p_{i_1j_1}^{(r_1)})\\
\Delta M_{X''}^{(i,j)}-1 & \text{for } (i,j)=(i_1,p_{i_1j_1}^{(r_1)}).
\end{cases}
\]
By iterating the procedure we are able to compute $M_Z$.

Now, note that, taken $(i_1,p_{i_1j_1}^{(r_1)})$ and taken $s_1\in
I_{i_1p_{i_1j_1}^{(r_1)}}$ such that
$a_{i_1p_{i_1j_1}^{(r_1)}}+s_1=a_{i_1j_1}+r_1$, it is easy to see that:
\[
m_{i_1p_{i_1j_1}^{(r_1)}}^{(s_1)}=m_{i_1j_1}^{(r_1)}
\text{\quad and \quad}
n_{i_1p_{i_1j_1}^{(r_1)}}^{(s_1)}=n_{i_1j_1}^{(r_1)}.
\]
This implies together with \eqref{eq:10} that
$p_{p_{i_1j_1}^{(r_1)}}^{(s_1)}=m_{i_1p_{i_1j_1}^{(r_1)}}^{(s_1)}+n_{i_1p_{i_1j_1}^{(r_1)}}^{(s_1)}-p_{i_1j_1}^{(r_1)}=j$. This
means that $\Delta
M_Z^{(i,j)}=\Delta M^{(i,j)}$ for any $(i,j)$. 

The proof works in a similar way if $a_{ij}\ge a_{i-1j+1}$ for any
$i,j\ge 0$.
\end{proof}

\begin{Cor}  \label{C:6}
  Let $M$ be an admissible matrix such that:
\[
a_{i_1j_1}-b_{i_1j_1}<a_{i_2j_2}-b_{i_2j_2}
\]
for any $(i_1,j_1)$, $(i_2,j_2)\in T$, with $i_1<i_2$ and
$j_1>j_2$. Suppose that one of the following conditions holds:
\begin{enumerate}
\item $a_{ij}\ge a_{i-1j+1}$ for any $i,j\ge 0$; 
\item $b_{ij}\ge b_{i+1j-1}$ for any $i,j\ge 0$.
\end{enumerate}
Then $M^{(i,j)}=M_Z^{(i,j)}$ for any $(i,j)$.
\end{Cor}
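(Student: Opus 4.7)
The plan is to deduce this corollary from Theorem \ref{T:4} by verifying that the given hypothesis implies $M$ is both plain and $\Delta$-regular, so that one of the monotonicity conditions (1) or (2) carries over unchanged.

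To show $M$ is plain, suppose for contradiction that $(a_{i_1j_1}+r_1,b_{i_1j_1}+r_1) = (a_{i_2j_2}+r_2,b_{i_2j_2}+r_2)$ for some $(i_1,j_1),(i_2,j_2)\in T$ with $r_1\in I_{i_1j_1}$ and $r_2\in I_{i_2j_2}$. The cases $i_1=i_2$ and $j_1=j_2$ are excluded immediately by Propositions \ref{P:10} and \ref{P:7} respectively (each forces the other index to agree and hence $r_1=r_2$). Assume then $i_1\ne i_2$ and $j_1\ne j_2$; by symmetry take $i_1<i_2$. If $j_1<j_2$, apply Proposition \ref{P:1} with $(i_2,j_2)$ as the larger-$j$ point: the hypotheses are met since $c_{i_2j_2}<0$, $c_{i_1j_1}\le 0$, and $a_{i_2j_2}+r_2=a_{i_1j_1}+r_1\ge a_{i_1j_1}$; the conclusion $i_2\le i_1$ contradicts $i_1<i_2$. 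If $j_1>j_2$, subtracting the two coordinate equalities yields $a_{i_1j_1}-b_{i_1j_1}=a_{i_2j_2}-b_{i_2j_2}$, directly contradicting the strict inequality assumed in the corollary.

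For $\Delta$-regularity, consider condition (1): given consecutive indices with $a_{i_kj_k}+r_k=a_{i_{k+1}j_{k+1}}+r_{k+1}$, $j_k<j_{k+1}$ and $i_k\ne i_{k+1}$, the same application of Proposition \ref{P:1} yields $i_{k+1}\le i_k$, hence $i_{k+1}<i_k$. The desired inequality $b_{i_kj_k}+r_k\le b_{i_{k+1}j_{k+1}}+r_{k+1}$ is, after subtracting the equality of $a$-values, equivalent to $a_{i_{k+1}j_{k+1}}-b_{i_{k+1}j_{k+1}}\le a_{i_kj_k}-b_{i_kj_k}$, which follows (strictly) from the corollary's hypothesis since $i_{k+1}<i_k$ and $j_{k+1}>j_k$. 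Condition (2) is verified by the dual argument using the $b$-analog of Proposition \ref{P:1} stated right after it.

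Having established that $M$ is plain and $\Delta$-regular and satisfies one of the monotonicity conditions of Theorem \ref{T:4}, the conclusion $M^{(i,j)}=M_Z^{(i,j)}$ follows by direct application of that theorem. The only real obstacle is notational bookkeeping: one must keep careful track of which element of each pair plays the role of the larger-$j$ (resp.\ larger-$i$) point in the asymmetric statements of Proposition \ref{P:1} and its $b$-analog; once the assignment is fixed, each step is a one-line verification.
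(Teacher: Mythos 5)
Your proposal is correct and follows the same route as the paper: the paper's proof simply asserts that the hypothesis makes $M$ plain and $\Delta$-regular and then invokes Theorem \ref{T:4}, while you supply the (correct) case-by-case verification of those two properties via Propositions \ref{P:1}, \ref{P:7} and \ref{P:10} and their $b$-analogues. No gaps; your argument is just a fleshed-out version of the paper's one-line proof.
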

\begin{proof}
  If $a_{i_1j_1}-b_{i_1j_1}<a_{i_2j_2}-b_{i_2j_2}$
for any $(i_1,j_1)$, $(i_2,j_2)\in T$, with $i_1<i_2$ and
$j_1>j_2$, then $M$ is plain and $\Delta$-regular. Then the statement
follows by Theorem \ref{T:4}.
\end{proof}

\begin{Cor}  \label{C:7}
  Let $M$ be a plain matrix and let $T=\{(i_1,j_1),\dots,(i_n,j_n)\}$.
  If $i_1+j_1=\dots=i_n+j_n$, then $M^{(i,j)}=M_Z^{(i,j)}$ for any $(i,j)$.
\end{Cor}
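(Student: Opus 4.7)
The plan is to apply Theorem~\ref{T:4}, which reduces the claim to verifying that $M$ is $\Delta$-regular and satisfies at least one of the two monotonicity conditions (1) or (2) of that theorem. I would verify condition~(1), namely $a_{ij}\ge a_{i-1,j+1}$ for all $i,j\ge 0$, together with $\Delta$-regularity. The key structural observation driving both is that when $T$ lies on a single anti-diagonal $i+j=s$, each $(i,j)\in T$ is the unique element of $T$ in its row and in its column; moreover admissibility condition~(2) applied at such $(i,j)$ forces $c_{i,t}=0$ for every $t>j$ and $c_{t,j}=0$ for every $t>i$.

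To check the monotonicity, if $(i,j)\notin T$ then $c_{ij}\ge 0$ and admissibility gives $a_{ij}\ge a_{i-1,j}\ge a_{i-1,j+1}$. If $(i,j)\in T$ then $(i,j+1)\notin T$ by the anti-diagonal hypothesis, so $c_{i,j+1}\ge 0$; combined with condition~(2) of admissibility this forces $c_{i,j+1}=0$, hence $a_{i,j+1}=a_{i-1,j+1}$, and the admissibility bound $a_{ij}\ge a_{i,j+1}$ closes the case. For $\Delta$-regularity, I would set $A_j=a_{i-1,j}$ and $B_i=b_{i,j-1}$ for $(i,j)\in T$ (well-defined because $i$ and $j$ determine each other on the anti-diagonal); both are non-negative integers counting $1$-entries, and by the observation above one has the clean decomposition $a_{ij}=A_j+c_{ij}$ and $b_{ij}=B_i+c_{ij}$.

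Given anti-diagonal pairs $(i_k,j_k),(i_l,j_l)\in T$ with $j_k<j_l$ (so $i_k>i_l$) and $a_{i_kj_k}+r_k=a_{i_lj_l}+r_l=v$, solving for $r$ eliminates it and yields $b_{ij}+r=B_i-A_j+v$, so condition~(1) of $\Delta$-regularity reduces to
\[
(B_{i_l}-B_{i_k})+(A_{j_k}-A_{j_l})\ge 0.
\]
Each summand is non-negative: $B_{i_l}\ge B_{i_k}$ follows from the chain $b_{i_l,j_l-1}\ge b_{i_l,j_k-1}\ge b_{i_k,j_k-1}$, where the first inequality uses that $c_{i_l,t}\ge 0$ for $j_k\le t<j_l$ (the only negative in row $i_l$ sits at column $j_l$) and the second is the admissibility bound in the row direction; the inequality $A_{j_k}\ge A_{j_l}$ is symmetric. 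Condition~(2) of $\Delta$-regularity follows from the identical computation with rows and columns interchanged. With $\Delta$-regularity and monotonicity (1) in hand, Theorem~\ref{T:4} yields $M^{(i,j)}=M_Z^{(i,j)}$ for all $(i,j)$. The main obstacle is spotting the affine identity $b_{ij}+r=B_i-A_j+v$, which linearizes the problem; once it is visible, the required inequalities become routine admissibility bookkeeping.
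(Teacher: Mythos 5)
Your proposal is correct and follows the paper's overall strategy: both arguments reduce the claim to verifying the hypotheses of Theorem \ref{T:4}, and your monotonicity check ($a_{ij}\ge a_{i-1j+1}$ by cases on the sign of $c_{ij}$, using admissibility together with the fact that $(i,j)\in T$ forces $c_{ij+1}=0$) is essentially the paper's. The two proofs diverge only in how $\Delta$-regularity is verified. The paper first notes (via Propositions \ref{P:6} and \ref{P:8}) that $T$ occupies consecutive positions on the anti-diagonal, reduces conditions (1)--(2) of $\Delta$-regularity to adjacent pairs $(i,j),(i+1,j-1)\in T$, and settles these with the single inequality $b_{i+1j-1}-a_{i+1j-1}\le b_{ij}-a_{ij}$, which is an immediate reuse of the two monotonicity statements just proved. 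You instead treat an arbitrary pair of elements of $T$ directly, writing $a_{ij}=A_j+c_{ij}$ and $b_{ij}=B_i+c_{ij}$ with $A_j=a_{i-1j}$ and $B_i=b_{ij-1}$, so that on a level set $a_{ij}+r=v$ one gets $b_{ij}+r=B_i-A_j+v$, and then prove the needed monotonicity of $A$ and $B$ along $T$ by admissibility bookkeeping, using only that each row and each column meets $T$ at most once (which is indeed immediate from the anti-diagonal hypothesis, with no need for Propositions \ref{P:6} and \ref{P:8}). Your route is slightly longer but is independent of the structural claim that $T$ fills a connected segment of the anti-diagonal; the paper's is shorter because its adjacent-pair inequality is literally the already-established conditions (1) and (2) of Theorem \ref{T:4}. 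Both verifications are complete and correct.
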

\begin{proof}
We want to prove that $M$ satisfies the hypothesis of Theorem \ref{T:4}.

By Proposition \ref{P:6}, Proposition \ref{P:8} and by hypothesis for
any $i$ there exists at most one $j\in \mathbb N$ such that $c_{ij}<0$
and, similarly, for any $j$ there exists at most one $i\in \mathbb N$
such that $c_{ij}<0$. Moreover, if $(i,j),(i+k,j-k)\in T$ for some
$i,j,k\in\mathbb N$, then $(i+1,j-1)$, \dots, $(i+k-1,j-k+1)\in T$.   

Now we show that $a_{ij}\ge a_{i-1j+1}$ for any $i,j$. If $c_{ij}=1$,
then this is true because $M$ is an admissible matrix and
$a_{ij}=\sum_{k\le i}c_{kj}$. If $c_{ij}=0$, by the fact that $M$ is admissible:
\[
a_{ij}=a_{i-1j}\ge a_{i-1j+1}.
\]
If $c_{ij}<0$, then $c_{ij+1}=0$ and by the fact that $M$ is
admissible:
\[
a_{ij}\ge a_{ij+1}=a_{i-1j+1}+c_{ij+1}=a_{i-1j+1}.
\]
In a similar way it is possible to see that $b_{ij}\ge b_{i+1j-1}$. 

Now we need to prove that $M$ is $\Delta$-regular. It is sufficient 
to show that for any $(i,j)$, $(i+1,j-1)\in T$:
\[
b_{i+1j-1}-a_{i+1j-1}\le b_{ij}-a_{ij}.
\]
This holds because we have just proved that $b_{i+1j-1}\le b_{ij}$ and
$a_{i+1j-1}\ge a_{ij}$. 
\end{proof}

In the following example we give an application of Theorem \ref{T:4}.
\begin{Ex}  \label{Ex:1}
Given the following matrix $M$, it is easy to see that it satisfies the hypotheses of Theorem \ref{T:4}
and that its first difference $\Delta M$ is the following: 
  \begin{figure}[H]
\begin{preview}
\begin{center}
\subfloat[$M$]{
\begin{tikzpicture}[x=0.45cm,y=0.45cm,font=\tiny]
\clip(0,0.5) rectangle (12,13);
  \draw[style=help lines,xstep=1,ystep=1] (1,1) grid (11,12);
\foreach \x in {1,...,10} \draw (\x,1) +(.5,.5)  node {\dots};
\foreach \y in {2,...,11} \draw (10,\y) +(.5,.5) node {\dots};
\draw (1.5,2.5) node{$9$};
\draw (2.5,2.5) node{$18$};
\draw (3.5,2.5) node{$21$};
\draw (4.5,2.5) node{$23$};
\foreach \x in {5,...,9} \draw (\x,2.5) +(.5,0) node {$24$};
\draw (1.5,3.5) node{$9$};
\draw (2.5,3.5) node{$18$};
\draw (3.5,3.5) node{$21$};
\draw (4.5,3.5) node{$23$};
\foreach \x in {5,...,9} \draw (\x,3.5) +(.5,0) node {$24$};
\draw (1.5,4.5) node{$8$};
\draw (2.5,4.5) node{$16$};
\draw (3.5,4.5) node{$21$};
\draw (4.5,4.5) node{$23$};
\foreach \x in {5,...,9} \draw (\x,4.5) +(.5,0) node {$24$};
\draw (1.5,5.5) node{$7$};
\draw (2.5,5.5) node{$14$};
\draw (3.5,5.5) node{$21$};
\draw (4.5,5.5) node{$23$};
\foreach \x in {5,...,9} \draw (\x,5.5) +(.5,0) node {$24$};
\draw (1.5,6.5) node{$6$};
\draw (2.5,6.5) node{$12$};
\draw (3.5,6.5) node{$18$};
\draw (4.5,6.5) node{$23$};
\foreach \x in {5,...,9} \draw (\x,6.5) +(.5,0) node {$24$};
\draw (1.5,7.5) node{$5$};
\draw (2.5,7.5) node{$10$};
\draw (3.5,7.5) node{$15$};
\draw (4.5,7.5) node{$20$};
\draw (5.5,7.5) node{$23$};
\foreach \x in {6,...,9} \draw (\x,7.5) +(.5,0) node {$24$};
\draw (1.5,8.5) node{$4$};
\draw (2.5,8.5) node{$8$};
\draw (3.5,8.5) node{$12$};
\draw (4.5,8.5) node{$16$};
\draw (5.5,8.5) node{$19$};
\foreach \x in {6,...,9} \draw (\x,8.5) +(.5,0) node {$22$};
\draw (1.5,9.5) node{$3$};
\draw (2.5,9.5) node{$6$};
\draw (3.5,9.5) node{$9$};
\draw (4.5,9.5) node{$12$};
\draw (5.5,9.5) node{$15$};
\draw (6.5,9.5) node{$18$};
\foreach \x in {7,...,9} \draw (\x,9.5) +(.5,0) node {$19$};
\draw (1.5,10.5) node{$2$};
\draw (2.5,10.5) node{$4$};
\draw (3.5,10.5) node{$6$};
\draw (4.5,10.5) node{$8$};
\draw (5.5,10.5) node{$10$};
\draw (6.5,10.5) node{$12$};
\draw (7.5,10.5) node{$13$};
\foreach \x in {8,9} \draw (\x,10.5) +(.5,0) node {$14$};
\draw (1.5,11.5) node{$1$};
\draw (2.5,11.5) node{$2$};
\draw (3.5,11.5) node{$3$};
\draw (4.5,11.5) node{$4$};
\draw (5.5,11.5) node{$5$};
\draw (6.5,11.5) node{$6$};
\draw (7.5,11.5) node{$7$};
\foreach \x in {8,9} \draw (\x,11.5) +(.5,0) node {$8$};
\foreach \x in {0,...,9} \draw (\x,12.5) +(1.5,0) node {$\x$};
\foreach \y in {0,...,10} \draw (0.5,11.5-\y) node {$\y$};
\end{tikzpicture}}
\hspace{1cm}
\subfloat[$\Delta M$]{
\begin{tikzpicture}[x=0.45cm,y=0.45cm,font=\tiny]
\clip(0,0.5) rectangle (12,13);
  \draw[style=help lines,xstep=1,ystep=1] (1,1) grid (11,12);
\foreach \x in {1,...,10} \draw (\x,1) +(.5,.5)  node {\dots};
\foreach \y in {2,...,11} \draw (10,\y) +(.5,.5) node {\dots};
\foreach \x in {1,...,8} \draw (\x,2) +(.5,.5)  node {$0$};
\foreach \y in {2,...,11} \draw (9,\y) +(.5,.5) node {$0$};
\foreach \x in {1,2} \draw (\x,3.5) +(.5,0) node{$1$};
\draw (3.5,3.5) node {$-2$};
\foreach \x in {4,...,8} \draw (\x,3.5) +(.5,0) node{$0$};
\foreach \x in {1,2} \draw (\x,4.5) +(.5,0) node{$1$};
\draw (3.5,4.5) node {$-2$};
\foreach \x in {4,...,8} \draw (\x,4.5) +(.5,0) node{$0$};
\foreach \x in {1,...,3} \draw (\x,5.5) +(.5,0) node{$1$};
\draw (4.5,5.5) node {$-3$};
\foreach \x in {5,...,8} \draw (\x,5.5) +(.5,0) node{$0$};
\foreach \x in {1,...,3} \draw (\x,6.5) +(.5,0) node{$1$};
\draw (4.5,6.5) node {$0$};
\draw (5.5,6.5) node {$-2$};
\draw (6.5,6.5) node {$-1$};
\foreach \x in {7,...,8} \draw (\x,6.5) +(.5,0) node{$0$};
\foreach \x in {1,...,4} \draw (\x,7.5) +(.5,0) node{$1$};
\draw (5.5,7.5) node {$0$};
\draw (6.5,7.5) node {$-2$};
\foreach \x in {7,8} \draw (\x,7.5) +(.5,0) node{$0$};
\foreach \x in {1,...,4} \draw (\x,8.5) +(.5,0) node{$1$};
\draw (5.5,8.5) node {$0$};
\draw (6.5,8.5) node {$0$};
\foreach \x in {7} \draw (\x,8.5) +(.5,0) node{$-1$};
\draw (8.5,8.5) node {$0$};
\foreach \x in {1,...,6} \draw (\x,9.5) +(.5,0) node{$1$};
\draw (7.5,9.5) node {$0$};
\draw (8.5,9.5) node{$-1$};
\foreach \x in {1,...,6} \draw (\x,10.5) +(.5,0) node{$1$};
\draw (7.5,10.5) node {$0$};
\draw (8.5,10.5) node {$0$};
\foreach \x in {1,...,8} \draw (\x,11.5) +(.5,0) node{$1$};
\foreach \x in {0,...,9} \draw (\x,12.5) +(1.5,0) node {$\x$};
\foreach \y in {0,...,10} \draw (0.5,11.5-\y) node {$\y$};
\end{tikzpicture}
}
\end{center}
\end{preview}
\end{figure}
We see that:
\begin{itemize}
\item $c_{27}=-1$, $a_{27}=0$ and $b_{27}=5$ and we get the point
  $P_{05}$;
\item $c_{36}=-1$, $a_{36}=0$, and $b_{36}=3$ and we get the point
  $P_{03}$;
\item $c_{45}=-2$, $a_{45}=1$ and $b_{45}=2$ and we get the points
  $P_{12}$ and $P_{23}$;
\item $c_{55}=-1$, $a_{55}=0$ and $b_{55}=0$ and we get the point
  $P_{00}$;
\item $c_{54}=-2$, $a_{54}=1$ and $b_{54}=1$ and we get the points
  $P_{11}$ and $P_{22}$;
\item $c_{63}=-3$, $a_{63}=2$ and $b_{63}=0$ and we get the points
  $P_{20}, P_{31}, P_{42}$;
\item $c_{72}=-2$, $a_{72}=5$ and $b_{72}=0$ and we get the points
  $P_{50}$ and $P_{61}$;
\item $c_{82}=-2$, $a_{82}=3$ and $b_{82}=0$ and we get the points
  $P_{30}$ and $P_{41}$.
\end{itemize}
By Theorem \ref{T:4} we have that $M$ is the Hilbert matrix of a
scheme $Z$ whose points can be represented in a grid of $(1,0)$ and
$(0,1)$-lines in the following way:
\begin{figure}[H]
\begin{preview}
\begin{center}
\begin{tikzpicture}[line cap=round,line join=round,>=triangle 45,x=0.3cm,y=0.3cm]
\clip(-1,0) rectangle (11,12);
\fill [color=black] (3,1) circle (1.5pt);
\fill [color=black] (4,1) circle (1.5pt);
\fill [color=black] (3,2) circle (1.5pt);
\fill [color=black] (4,2) circle (1.5pt);
\fill [color=black] (3,3) circle (1.5pt);
\fill [color=black] (5,3) circle (1.5pt);
\fill [color=black] (4,4) circle (1.5pt);
\fill [color=black] (5,4) circle (1.5pt);
\fill [color=black] (3,5) circle (1.5pt);
\fill [color=black] (6,5) circle (1.5pt);
\fill [color=black] (5,6) circle (1.5pt);
\fill [color=black] (6,6) circle (1.5pt);
\fill [color=black] (4,7) circle (1.5pt);
\fill [color=black] (7,7) circle (1.5pt);
\fill [color=black] (8,7) circle (1.5pt);
\fill [color=black] (3,8) circle (1.5pt);
\fill [color=black] (6,8) circle (1.5pt);
\fill [color=black] (7,8) circle (1.5pt);
\fill [color=black] (8,8) circle (1.5pt);
\fill [color=black] (4,9) circle (1.5pt);
\fill [color=black] (5,9) circle (1.5pt);
\fill [color=black] (7,9) circle (1.5pt);
\fill [color=black] (9,9) circle (1.5pt);
\fill [color=black] (10,9) circle (1.5pt);
\draw (3,0.5) -- (3,9.5);
\draw[color=black] (3,10.5) node {\tiny $C_0$};
\draw (4,0.5) -- (4,9.5);
\draw[color=black] (4,10.5) node {\tiny $C_1$};
\draw (5,2.5) -- (5,9.5);
\draw[color=black] (5,10.5) node {\tiny $C_2$};
\draw (6,4.5) -- (6,9.5);
\draw[color=black] (6,10.5) node {\tiny $C_3$};
\draw (7,6.5) -- (7,9.5);
\draw[color=black] (7,10.5) node {\tiny $C_4$};
\draw (8,6.5) -- (8,9.5);
\draw[color=black] (8,10.5) node {\tiny $C_5$};
\draw (9,8.5) -- (9,9.5);
\draw[color=black] (9,10.5) node {\tiny $C_6$};
\draw (10,8.5) -- (10,9.5);
\draw[color=black] (10,10.5) node {\tiny $C_7$};
\draw (2.5,1) -- (4.5,1);
\draw[color=black] (1.5,1) node {\tiny $R_8$};
\draw (2.5,2) -- (4.5,2);
\draw[color=black] (1.5,2) node {\tiny $R_7$};
\draw (2.5,3) -- (5.5,3);
\draw[color=black] (1.5,3) node {\tiny $R_6$};
\draw (2.5,4) -- (5.5,4);
\draw[color=black] (1.5,4) node {\tiny $R_5$};
\draw (2.5,5) -- (6.5,5);
\draw[color=black] (1.5,5) node {\tiny $R_4$};
\draw (2.5,6) -- (6.5,6);
\draw[color=black] (1.5,6) node {\tiny $R_3$};
\draw (2.5,7) -- (8.5,7);
\draw[color=black] (1.5,7) node {\tiny $R_2$};
\draw (2.5,8) -- (8.5,8);
\draw[color=black] (1.5,8) node {\tiny $R_1$};
\draw (2.5,9) -- (10.5,9);
\draw[color=black] (1.5,9) node {\tiny $R_0$};
\end{tikzpicture}
\caption{The scheme $Z$}
\end{center}
\end{preview}
\end{figure}
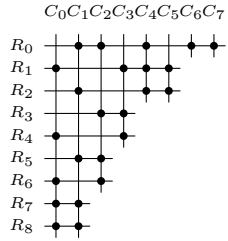
\end{Ex}

\begin{Ex} \label{Ex:0} 
In this example we make some remarks on the hypotheses of Theorem \ref{T:4}.
  \begin{enumerate}
   \item Let $M$ be a plain matrix such that either condition 1 or condition 2
of Theorem \ref{T:4} holds and suppose that it is not
  $\Delta$-regular. Then it might be
  $M_Z\ne M$. As an example let us consider a scheme  $Y$ whose points can be represented in a grid of $(1,0)$ and $(0,1)$-lines in the following way  and the associated Hilbert matrix $M=M_Y$ which satisfies the previous conditions:
\begin{figure}[H]
\begin{preview}
\begin{center}
\subfloat[$Y$]{
\begin{tikzpicture}[line cap=round,line join=round,>=triangle 45,x=0.3cm,y=0.3cm]
\clip(0,0) rectangle (6,6);
\fill [color=black] (1,1) circle (1.5pt);
\fill [color=black] (2,1) circle (1.5pt);
\fill [color=black] (1,2) circle (1.5pt);
\fill [color=black] (2,2) circle (1.5pt);
\fill [color=black] (3,3) circle (1.5pt);
\fill [color=black] (4,3) circle (1.5pt);
\fill [color=black] (5,3) circle (1.5pt);
\fill [color=black] (3,4) circle (1.5pt);
\fill [color=black] (4,4) circle (1.5pt);
\fill [color=black] (5,4) circle (1.5pt);
\fill [color=black] (2,5) circle (1.5pt);
\fill [color=black] (3,5) circle (1.5pt);
\fill [color=black] (4,5) circle (1.5pt);
\fill [color=black] (5,5) circle (1.5pt);
\draw (1,0.5) -- (1,5.5);
\draw (2,0.5) -- (2,5.5);
\draw (3,2.5) -- (3,5.5);
\draw (4,2.5) -- (4,5.5);
\draw (5,2.5) -- (5,5.5);
\draw (0.5,1) -- (2.5,1);
\draw (0.5,2) -- (2.5,2);
\draw (0.5,3) -- (5.5,3);
\draw (0.5,4) -- (5.5,4);
\draw (0.5,5) -- (5.5,5);
\end{tikzpicture}
}
\hspace{1cm}
\subfloat[$M=M_Y$]{
\begin{tikzpicture}[x=0.45cm,y=0.45cm,font=\tiny]
\clip(0,0) rectangle (9,9);
  \draw[style=help lines,xstep=1,ystep=1] (1,1) grid (8,8);
\foreach \x in {1,...,7} \draw (\x,1) +(.5,.5)  node {\dots};
\foreach \y in {2,...,7} \draw (7,\y) +(.5,.5) node {\dots};
\draw (1.5,2.5) node{$5$};
\draw (2.5,2.5) node{$10$};
\draw (3.5,2.5) node{$13$};
\foreach \x in {4,...,6} \draw (\x,2) +(.5,.5)  node {$14$};
\draw (1.5,3.5) node{$5$};
\draw (2.5,3.5) node{$10$};
\draw (3.5,3.5) node{$13$};
\foreach \x in {4,...,6} \draw (\x,3) +(.5,.5)  node {$14$};
\draw (1.5,4.5) node{$4$};
\draw (2.5,4.5) node{$8$};
\draw (3.5,4.5) node{$11$};
\draw (4.5,4.5) node{$13$};
\foreach \x in {5,6} \draw (\x,4) +(.5,.5)  node {$14$};
\draw (1.5,5.5) node{$3$};
\draw (2.5,5.5) node{$6$};
\draw (3.5,5.5) node{$9$};
\draw (4.5,5.5) node{$12$};
\foreach \x in {5,6} \draw (\x,5) +(.5,.5)  node {$14$};
\draw (1.5,6.5) node{$2$};
\draw (2.5,6.5) node{$4$};
\draw (3.5,6.5) node{$6$};
\draw (4.5,6.5) node{$8$};
\foreach \x in {5,6} \draw (\x,6) +(.5,.5)  node {$10$};
\draw (1.5,7.5) node{$1$};
\draw (2.5,7.5) node{$2$};
\draw (3.5,7.5) node{$3$};
\draw (4.5,7.5) node{$4$};
\foreach \x in {5,6} \draw (\x,7) +(.5,.5)  node {$5$};
\foreach \x in {0,...,6} \draw (\x,8.5) +(1.5,0) node {$\x$};
\foreach \y in {0,...,6} \draw (0.5,7.5-\y) node {$\y$};
\end{tikzpicture}
}
\end{center}
\end{preview}
\end{figure}
Indeed, we can take, as in the definition of $\Delta$-regular matrix,
$(i_1,j_1)=(4,3)$ and $i_2,j_2
=(3,4)$. So $c_{43}=c_{34}=-1$, $a_{43}=a_{34}=1$, while $b_{43}=1$
and $b_{34}=0$. This means that $b_{43}-b_{34}=1>0=a_{43}-a_{34}$.
Then, by adding the points on the $(1,0)$-lines and using
\cite[Theorem 3.1]{BM}, it is possible to see that $M_Z\ne M$:
\begin{figure}[H]
\begin{preview}
\begin{center}
\subfloat[$Z$]{
\begin{tikzpicture}[line cap=round,line join=round,>=triangle 45,x=0.3cm,y=0.3cm]
\clip(0,0) rectangle (6,6);
\fill [color=black] (1,1) circle (1.5pt);
\fill [color=black] (2,1) circle (1.5pt);
\fill [color=black] (1,2) circle (1.5pt);
\fill [color=black] (2,2) circle (1.5pt);
\fill [color=black] (1,3) circle (1.5pt);
\fill [color=black] (3,3) circle (1.5pt);
\fill [color=black] (4,3) circle (1.5pt);
\fill [color=black] (3,4) circle (1.5pt);
\fill [color=black] (4,4) circle (1.5pt);
\fill [color=black] (5,4) circle (1.5pt);
\fill [color=black] (2,5) circle (1.5pt);
\fill [color=black] (3,5) circle (1.5pt);
\fill [color=black] (4,5) circle (1.5pt);
\fill [color=black] (5,5) circle (1.5pt);
\draw (1,0.5) -- (1,5.5);
\draw (2,0.5) -- (2,5.5);
\draw (3,2.5) -- (3,5.5);
\draw (4,2.5) -- (4,5.5);
\draw (5,3.5) -- (5,5.5);
\draw (0.5,1) -- (2.5,1);
\draw (0.5,2) -- (2.5,2);
\draw (0.5,3) -- (4.5,3);
\draw (0.5,4) -- (5.5,4);
\draw (0.5,5) -- (5.5,5);
\end{tikzpicture}
}
\hspace{1cm}
\subfloat[$M_Z$]{
\begin{tikzpicture}[x=0.45cm,y=0.45cm,font=\tiny]
\clip(0,0) rectangle (9,9);
  \draw[style=help lines,xstep=1,ystep=1] (1,1) grid (8,8);
\foreach \x in {1,...,7} \draw (\x,1) +(.5,.5)  node {\dots};
\foreach \y in {2,...,7} \draw (7,\y) +(.5,.5) node {\dots};
\draw (1.5,2.5) node{$5$};
\draw (2.5,2.5) node{$10$};
\draw (3.5,2.5) node{$13$};
\foreach \x in {4,...,6} \draw (\x,2) +(.5,.5)  node {$14$};
\draw (1.5,3.5) node{$5$};
\draw (2.5,3.5) node{$10$};
\draw (3.5,3.5) node{$13$};
\foreach \x in {4,...,6} \draw (\x,3) +(.5,.5)  node {$14$};
\draw (1.5,4.5) node{$4$};
\draw (2.5,4.5) node{$8$};
\draw (3.5,4.5) node{$11$};
\draw (4.5,4.5) node{$14$};
\foreach \x in {5,6} \draw (\x,4) +(.5,.5)  node {$14$};
\draw (1.5,5.5) node{$3$};
\draw (2.5,5.5) node{$6$};
\draw (3.5,5.5) node{$9$};
\draw (4.5,5.5) node{$12$};
\foreach \x in {5,6} \draw (\x,5) +(.5,.5)  node {$14$};
\draw (1.5,6.5) node{$2$};
\draw (2.5,6.5) node{$4$};
\draw (3.5,6.5) node{$6$};
\draw (4.5,6.5) node{$8$};
\foreach \x in {5,6} \draw (\x,6) +(.5,.5)  node {$10$};
\draw (1.5,7.5) node{$1$};
\draw (2.5,7.5) node{$2$};
\draw (3.5,7.5) node{$3$};
\draw (4.5,7.5) node{$4$};
\foreach \x in {5,6} \draw (\x,7) +(.5,.5)  node {$5$};
\foreach \x in {0,...,6} \draw (\x,8.5) +(1.5,0) node {$\x$};
\foreach \y in {0,...,6} \draw (0.5,7.5-\y) node {$\y$};
\end{tikzpicture}
}
\end{center}
\end{preview}
\end{figure}
  \item It is easy to see that the Hilbert matrix of $3$ generic points of $\mathbb P^1\times
  \mathbb P^1$ is such that $a_{ij}\ge a_{i-1j+1}$ and $b_{ij}\ge
  b_{i+1j-1}$ for any $i,j\ge 0$ and that it is $\Delta$-regular, but it
  is not plain:
  \begin{figure}[H]
\begin{preview}
    \begin{center}
    \begin{tikzpicture}[x=0.45cm,y=0.45cm,font=\tiny]
\clip(0,1) rectangle (5,6);
  \draw[style=help lines,xstep=1,ystep=1] (1,1) grid (5,5);
\foreach \x in {1,...,4} \draw (\x,1) +(.5,.5)  node {\dots};
\foreach \y in {2,...,4} \draw (4,\y) +(.5,.5) node {\dots};
\draw (1.5,2.5) node{$3$};
\draw (2.5,2.5) node{$3$};
\draw (3.5,2.5) node{$3$};
\draw (1.5,3.5) node{$2$};
\draw (2.5,3.5) node{$3$};
\draw (3.5,3.5) node{$3$};
\draw (1.5,4.5) node{$1$};
\draw (2.5,4.5) node{$2$};
\draw (3.5,4.5) node{$3$};
\foreach \x in {0,...,3} \draw (\x,5.5) +(1.5,0) node {$\x$};
\foreach \y in {0,...,3} \draw (0.5,4.5-\y) node {$\y$};
\end{tikzpicture}
\end{center}
\end{preview}
  \end{figure}
Indeed, $a_{12}=a_{21}=0$ and $b_{12}=b_{21}=0$. In this case it is
clear that $M_Z\ne M$, because $\deg Z=4\ne 3$.
   \item Let $X\subset \mathbb P^1\times \mathbb P^1$ be a reduced
     zero-dimensional scheme whose points can be represented on a grid
     of $(1,0)$ and $(0,1)$-lines in the following way:
\begin{figure}[H]
\begin{center}
\begin{preview}
\subfloat[$X$]{
\begin{tikzpicture}[line cap=round,line join=round,>=triangle 45,x=0.3cm,y=0.3cm]
\clip(0,0) rectangle (5,5);
\fill [color=black] (1,1) circle (1.5pt);
\fill [color=black] (2,1) circle (1.5pt);
\fill [color=black] (1,2) circle (1.5pt);
\fill [color=black] (2,2) circle (1.5pt);
\fill [color=black] (3,3) circle (1.5pt);
\fill [color=black] (4,3) circle (1.5pt);
\fill [color=black] (3,4) circle (1.5pt);
\fill [color=black] (4,4) circle (1.5pt);
\draw (1,0.5) -- (1,4.5);
\draw (2,0.5) -- (2,4.5);
\draw (3,2.5) -- (3,4.5);
\draw (4,2.5) -- (4,4.5);
\draw (0.5,1) -- (2.5,1);
\draw (0.5,2) -- (2.5,2);
\draw (0.5,3) -- (4.5,3);
\draw (0.5,4) -- (4.5,4);
\end{tikzpicture}
}
\hspace{1cm}
\subfloat[$M_X$]{
\begin{tikzpicture}[x=0.45cm,y=0.45cm,font=\tiny]
\clip(0,0) rectangle (8,8);
  \draw[style=help lines,xstep=1,ystep=1] (1,1) grid (7,7);
\foreach \x in {1,...,6} \draw (\x,1) +(.5,.5)  node {\dots};
\foreach \y in {2,...,6} \draw (6,\y) +(.5,.5) node {\dots};
\draw (1.5,2.5) node{$4$};
\foreach \x in {2,...,5} \draw (\x,2) +(.5,.5)  node {$8$};
\draw (1.5,3.5) node{$4$};
\foreach \x in {2,...,5} \draw (\x,3) +(.5,.5)  node {$8$};
\draw (1.5,4.5) node{$3$};
\draw (2.5,4.5) node{$6$};
\draw (3.5,4.5) node{$7$};
\draw (4.5,4.5) node{$8$};
\draw (5.5,4.5) node{$8$};
\draw (1.5,5.5) node{$2$};
\draw (2.5,5.5) node{$4$};
\draw (3.5,5.5) node{$6$};
\draw (4.5,5.5) node{$8$};
\draw (5.5,5.5) node{$8$};
\draw (1.5,6.5) node{$1$};
\draw (2.5,6.5) node{$2$};
\draw (3.5,6.5) node{$3$};
\draw (4.5,6.5) node{$4$};
\draw (5.5,6.5) node{$4$};
\foreach \x in {0,...,5} \draw (\x,7.5) +(1.5,0) node {$\x$};
\foreach \y in {0,...,5} \draw (0.5,6.5-\y) node {$\y$};
\end{tikzpicture}
}
\end{preview}
\end{center}
\end{figure}
Then it is easy to see that the Hilbert matrix $M_X$ of $X$ is plain
and $\Delta$-regular, but it does not satisfies either condition 1
or condition 2 of Theorem \ref{T:4}. Indeed, $a_{22}=1<2=a_{13}$ and
$b_{22}=1<2=b_{31}$. However, in this case $Z=X$.
 \end{enumerate}
\end{Ex}

\begin{op}
  Given an admissible matrix $M$, which is plain and $\Delta$-regular,
  but which does not satisfy either condition 1 or condition 2, is $M$
  the Hilbert function of some zero-dimensional schemes? In particular, given the associated scheme $Z$, $M_Z=M$?   
\end{op}

\end{document}